\theoremstyle{definition}
\newcommand{\R}{\mathbb{R}}
\newcommand{\N}{\mathbb{N}}
\newcommand{\Z}{\mathbb{Z}}
\newcommand{\C}{\mathbb{C}}
\renewenvironment{cases}{%
 \begin{dcases}%
    }{%
 \end{dcases}
}
\newcounter{defi}
\numberwithin{defi}{section}
\newtheorem{theorem}[defi]{Theorem}
\newtheorem{proposition}[defi]{Proposition}
\newtheorem{corollary}[defi]{Corollary}
\newtheorem{definition}[defi]{Definition}
\newtheorem{lemma}[defi]{Lemma}
\newtheorem{example}[defi]{Example}
\newtheorem*{theorem*}{Theorem}
\newtheorem*{lemma*}{Lemma}
\newtheorem*{conjecture*}{Conjecture}
\newtheorem*{definition*}{Definition}
\newtheorem{remark}[defi]{Remark}
\newtheorem{mainthm}{Theorem}
\newcommand{\action}{G\curvearrowright X}
\newcommand{\supp}[1]{\text{supp}\left(#1\right)}
\newcommand{\Fix}[1]{\text{Fix}(#1)}
\newcommand{\Aper}{\text{Aper}(G\curvearrowright X)}
\newcommand{\luno}{\ell^1(G\curvearrowright X)}
\newcommand{\csigma}{C^*(G\curvearrowright X)}
\newcommand{\Per}{\text{Per}(G\curvearrowright X)}
\newcommand{\lone}[2]{\ell^1(#1\curvearrowright #2)}
\newcommand{\phixs}[3]{\phi_{#1,#2}(#3)}
\title[On the involutive Banach algebra]{On the involutive Banach algebra associated to topologically free dynamical systems}
\date{\today}
\thanks{The author was supported by a Comisión Académica de Posgrado (CAP) Scholarship and by Grupo CSIC 883174 (UdelaR, Uruguay). Aditionally, the author was partially supported by Eusebio Gardella through the Swedish Research Council Grant 2021-04561.}
\author[Tabaré Roland]{Tabaré Roland}
\address{Instituto de Matemática y Estadística, Facultad de Ingeniería, Universidad de la República, Uruguay}
\email{tabarer@fing.edu.uy}
\begin{document}

\begin{abstract}
    Given an action $G \curvearrowright X$ of a discrete and countable infinite group $G$ on a compact and Hausdorff space $X$, we regard $\luno$ as the Banach *-algebra crossed product associated to the action. We characterize topological freeness of the action by showing that it is equivalent to every nontrivial closed ideal of $\luno$ intersecting $C(X)$ nontrivially. Most surprisingly, we show that when $G$ is torsion-free and abelian, $\luno$ can detect freeness of $\action$: indeed, we show that $\action$ is free if and only if every closed ideal of $\luno$ is self-adjoint, a property that is automatic in $C^*$-algebras. We also show with an example that this result does not hold beyond the torsion-free abelian case.
\end{abstract}

\maketitle

\tableofcontents

\section{Introduction}

The crossed product is a usual construction in the theory of operator algebras that allows one to obtain an algebra from the action of a group on a $C^*$-algebra. It is desirable that the crossed product has a Banach algebra structure, and the most common way to do that is by endowing the algebraic skeleton of the algebra with a $C^*$-seminorm in order to take the completion of the algebra, which is then a $C^*$-algebra. $C^*$-crossed products have been studied at length through the years, and are a really important family of examples of $C^*$-algebras.

A concrete family of examples comes from an action $\action$ of a countable infinite discrete group $G$ on a compact Hausdorff space $X$ via homeomorphisms, also called a dynamical system. In this situation, the action of the group on the space gives rise to an action on the algebra $C(X)$ of continuous functions, which is a commutative $C^*$-algebra, and from this context one can construct a crossed product endowed with a $C^*$-norm. Nevertheless, there are many other types of Banach algebras that can be obtained from the action of a group on a $C^*$-algebra, and in the latter years there has been a rise of interest in those. For example, the theory of $L^p$-operator algebras has been sparkling in recent years, and an analogue $L^p$-operator algebra crossed product can be constructed; see, for example, the survey paper \cite{modernLp} by Gardella. One can also consider the involutive Banach algebra $\luno$, a concrete example whose $C^*$-envelope is the usual $C^*$-crossed product. In this paper we are interested in the study of this algebra and, in particular, of its ideal structure in relation with the dynamics of the action it arises from. Here, by an ideal we always mean a two-sided ideal. The algebra $\luno$ has a very rich structure. For example, it can have closed non-self-adjoint ideals, which is something that cannot occur in its $C^*$-envelope because, in a $C^*$-algebra, every closed ideal is self-adjoint. This is one of our main interests, as we are going to see that the existence of closed non-self-adjoint ideals is related to the action not being free. This means that we may detect freeness of the action $\action$ by looking at the algebra $\luno$, in a way that cannot be done in the $C^*$-algebraic crossed product.

For actions of $\Z$, the algebra $\lone{\Z}{X}$ has been studied by de Jeu, Svensson and Tomiyama in \cite{1}, by de Jeu and Tomiyama in \cite{2,3,4,5}, and by Kishimoto and Tomiyama in \cite{KishimotoTomiyama}. More recently, in \cite{gradedAlgebrasAndPartialActions}, Jaur\'e and M\u{a}ntoiu studied the $\ell^1$-Banach *-subalgebra of a topologically graded $C^*$-algebra over a discrete group and, when applying their results to actions of groups, obtained that $\luno$ is inverse closed in its enveloping $C^*$-crossed product if the acting group is rigidly symmetric (see \cite[Proposition 3.1]{gradedAlgebrasAndPartialActions}, where the result is stated in the more general framework of partial actions). We are interested in the type of results obtained in \cite{1}. There, it was shown how different dynamical properties are related to analytic-algebraic properties of the ideal structure of $\lone{\Z}{X}$. In particular, \cite[Theorem 4.4]{1} states the following: an integer action $\Z\curvearrowright X$ is free (i.e., the action of every element of $G$ apart from the unit has no fixed points) if and only if every closed ideal of $\lone{\Z}{X}$ is self-adjoint.

We are interested in detecting freeness of $\action$ by looking at the algebra $\luno$ in the broader context of actions of discrete and countable infinite groups. There are several obstructions to extending the arguments used in \cite[Theorem 4.4]{1} to actions of more general groups. First of all, the proof uses the existence of a closed non-self-adjoint ideal in $\ell^1(\Z)$, which is not necessarily true for an arbitrary countable discrete group. It also uses the facts that, for $\Z$-actions, not being free is equivalent to having a finite orbit, and that every nontrivial subgroup of $\Z$ is isomorphic to $\Z$. As it turns out, the statement of \cite[Theorem 4.4]{1} does not hold in general for arbitrary groups; see Example \refeq{ctrExample}. On the other hand, it can be shown that freeness of the action always implies that every closed ideal is self-adjoint (see Corollary \refeq{libImpSelfAdj}). In order to prove the other implication we delve into the realm of abelian groups. Imposing some conditions on the group, we obtain the following concrete statement for abelian groups.

\begin{mainthm}\label{mainThmD}
    Let $G$  be a torsion-free abelian, discrete and countable group. Then, the action $\action$ is free if and only if every closed ideal of $\luno$ is self-adjoint.
\end{mainthm}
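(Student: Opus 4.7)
The forward direction is Corollary~\ref{libImpSelfAdj}, so I focus on the converse: assuming $\action$ is not free, I produce a closed non-self-adjoint ideal in $\luno$. Pick $g_0 \in G \setminus \{e\}$ and $x_0 \in X$ with $g_0 x_0 = x_0$; since $G$ is torsion-free, $\langle g_0 \rangle \cong \Z$.

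The first key observation is that, because $G$ is abelian, the closed set $F := \Fix{g_0}$ is $G$-invariant: if $g_0 x = x$ and $g \in G$, then $g_0(gx) = g(g_0 x) = gx$. Hence the restriction $C(X) \to C(F)$ is $G$-equivariant and induces a surjective continuous *-homomorphism $\pi : \luno \to A := \ell^1(G \curvearrowright F)$, so it is enough to construct a closed non-self-adjoint ideal of $A$. The second key observation is that $g_0$ acts trivially on $F$ and $G$ is abelian, so $u_{g_0}$ commutes with every $f \in C(F)$ and with every $u_g$; this makes $\delta_n \mapsto u_{g_0^n}$ an embedding of $\ell^1(\Z)$ as a central Banach *-subalgebra of $A$. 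Classically $\ell^1(\Z)$ admits closed non-self-adjoint ideals---this is exactly the input used for the $\Z$-case in \cite[Theorem~4.4]{1}---so pick such $J$ and set $I := \overline{J \cdot A}$, which is a closed two-sided ideal of $A$ by centrality of $J$.

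The verification that $I \neq I^*$ reduces to the identity $I \cap \ell^1(\Z) = J$, since this forces $I^* \cap \ell^1(\Z) = J^* \neq J$ and hence $I \neq I^*$. Fixing a transversal $\{r_c\}_{c \in G/\langle g_0 \rangle}$, the $\ell^1$-structure of $A$ yields a Banach-space decomposition $A = \bigoplus_c u_{r_c} \cdot \ell^1(\langle g_0 \rangle, C(F))$, and centrality of $J$ makes $I$ split accordingly as $\bigoplus_c u_{r_c} \cdot \overline{J \cdot \ell^1(\langle g_0 \rangle, C(F))}$; the intersection with $\ell^1(\Z)$ is therefore computed in the $c = e$ component. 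The projective-tensor identification $\ell^1(\langle g_0 \rangle, C(F)) \cong \ell^1(\Z)\,\hat{\otimes}_\pi\, C(F)$ realizes the extended ideal as $J\,\hat{\otimes}_\pi\, C(F)$, and evaluation at any point of $F$ provides a bounded left inverse to the embedding $\ell^1(\Z) \hookrightarrow \ell^1(\Z)\,\hat{\otimes}_\pi\, C(F)$ whose image of $J\,\hat{\otimes}_\pi\, C(F)$ is exactly $J$; combining these gives the claim. The main obstacle is to justify this coset-decomposition / tensor-product chain carefully in the case when $\langle g_0 \rangle$ does not split off as a direct summand of $G$ (e.g. $G = \Q$, $g_0 = 1$), which is precisely where the torsion-free abelian hypothesis on $G$ is used most decisively.
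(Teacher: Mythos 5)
Your proposal is correct, and it establishes the nontrivial direction by a construction that differs from the paper's in its key step. The paper restricts to the orbit closure $D=\overline{Gx}$ rather than to $F=\Fix{g_0}$, observes that the full stabilizer $G_x$ (infinite by torsion-freeness) acts trivially on $D$, and then applies Lemma \ref{propLiftIdeal}: from a closed non-self-adjoint ideal $J\subseteq\ell^1(G_x)$ (Rudin's theorem for an arbitrary infinite abelian group) it builds the \emph{largest} candidate ideal $\widetilde J=\bigcap_{x,s}\phi_{x,s}^{-1}(J)$ using the contractive coset-slice maps $\phi_{x,s}(f)(h)=f(sh)(x)$, verifies the ideal property by convolution estimates, and exhibits an explicit element $\widehat\xi\in\widetilde J$ with $\widehat{\xi}^{\,*}\notin\widetilde J$. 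You instead use only the copy $\langle g_0\rangle\cong\Z$ (so Rudin is needed only in its classical form for $\Z$), and you form the \emph{smallest} closed ideal $I=\overline{J\cdot A}$, two-sided by centrality of $u_{g_0}$ in $A=\ell^1(G\curvearrowright F)$; non-self-adjointness then follows from $I\cap\ell^1(\Z)=J$, proved via the $\ell^1$-splitting of $A$ over cosets of $\langle g_0\rangle$ and the evaluation homomorphisms, which are in effect the paper's maps $\phi_{y,e}$ again. Both routes end identically, pulling the ideal back along the surjective contractive $*$-homomorphism onto the restricted system, as in Lemma \ref{propAlmFree}. Your approach buys a more conceptual ideal (generated rather than cut out by slice conditions) and a weaker input from Rudin; the paper's lemma is stated for any infinite stabilizer of an abelian action, so it is slightly more flexible.

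Two remarks on your write-up. First, the ``main obstacle'' you flag at the end is not one: a transversal of $\langle g_0\rangle$ exists for any subgroup, and the Banach-space decomposition $A=\bigoplus_c u_{r_c}\,\ell^1(\langle g_0\rangle, C(F))$ uses only the coset partition of $G$, never that $\langle g_0\rangle$ splits off as a direct summand; the case $G=\Q$, $g_0=1$ goes through verbatim. Torsion-freeness enters at exactly one point, namely to make $\langle g_0\rangle$ infinite, since for finite abelian $H$ every ideal of $\ell^1(H)\cong\C^{|H|}$ is self-adjoint and the construction would collapse. Second, phrase the intermediate object as the closed linear span of $J\otimes C(F)$ inside $\ell^1(\Z,C(F))$ rather than as an abstract projective tensor product $J\,\hat{\otimes}_\pi\,C(F)$ (which need not embed isometrically in general); your argument only uses the closed span and the contractive evaluation $\mathrm{id}\otimes\mathrm{ev}_y$, so nothing is lost.
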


We remark that the analogue of the result above, in the $C^*$-algebraic context, does not hold, as closed ideals are always self-adjoint in $C^*$-algebras. As such, the $C^*$-algebraic crossed product cannot detect freeness of the action the same way that $\luno$ does, and may not even detect freeness at all. This shows a stark contrast between the algebra $\luno$ and its $C^*$-envelope.

Different notions of freeness of an action play a key role in this paper. Apart of freeness of the action, the notion of topological freeness will be recurrent (see Definition \refeq{topFree}). The first landmark that we come across in this paper is the following theorem, in which topological freeness of $\action$ is characterized through an analytic-algebraic property of $\luno$. In order to make sense of the theorem, we mention that $\luno$ canonically contains the algebra $C(X)$ of continuous functions.

\begin{mainthm}\label{mainThmA}
    The following are equivalent:
    \begin{enumerate}[(1)]
        \item For every closed nonzero ideal $I$ of $\luno$, the intersection $I\cap C(X)$ is nonzero.
        \item For every closed and self-adjoint nonzero ideal of $\luno$, the intersection $I\cap C(X)$ is nonzero.
        \item The action $\action$ is topologically free.
    \end{enumerate}
\end{mainthm}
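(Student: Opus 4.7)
The plan is to establish the cycle $(1) \Rightarrow (2) \Rightarrow (3) \Rightarrow (1)$. The implication $(1) \Rightarrow (2)$ is immediate, since every closed self-adjoint ideal is in particular closed.

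For $(2) \Rightarrow (3)$, I argue by contrapositive and construct a concrete counterexample. Supposing $\action$ is not topologically free, pick $g_0 \in G \setminus\{e\}$ and a nonempty open set $U \subseteq \Fix{g_0}$. For each $x \in X$ consider the contractive *-representation $\pi_x: \luno \to B(\ell^2(Gx))$ determined by $\pi_x(h)\delta_y = h(y)\delta_y$ for $h \in C(X)$ and $\pi_x(\delta_g)\delta_y = \delta_{gy}$. The key computation,
\begin{equation*}
\pi_x\bigl(f(\delta_{g_0}-\delta_e)\bigr)\delta_y = f(g_0 y)\delta_{g_0 y} - f(y)\delta_y \qquad (y \in Gx),
\end{equation*}
for $f \in C(X)$ with $\supp{f}\subseteq U$, shows this expression vanishes in every case: if $y \in U$ the two terms cancel because $g_0 y = y$; and if $y \notin U$ both coefficients vanish, since $g_0 y \in U$ would force $g_0^2 y = g_0 y$ and hence $y = g_0 y \in U$, a contradiction. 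Taking $\pi = \bigoplus_{x \in X}\pi_x$, the kernel $I := \ker\pi$ is a closed self-adjoint ideal that contains the nonzero element $f(\delta_{g_0}-\delta_e)$ (for any $0 \neq f$ with $\supp{f}\subseteq U$) and that satisfies $I \cap C(X) = \{0\}$ because $\pi|_{C(X)}$ is isometric.

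For $(3) \Rightarrow (1)$, fix a nonzero closed ideal $I$ and pick $0 \neq a \in I$. Multiplying by some $\delta_{g^{-1}}$ (which lies in $\luno$ as $X$ is compact), I may arrange $a_e \neq 0$, with $|a_e| > c$ on some nonempty open $W$ for an appropriate $c > 0$. Baire's theorem applied to the countably many open dense sets $\{x : gx \neq x\}$ (for $g \neq e$) yields that $\Aper$ is dense in $X$, so I fix an aperiodic point $x_0 \in W$. For each finite $F \subseteq G$ with $e \in F$, continuity produces an open neighborhood $V \ni x_0$ contained in $W$ with $gV \cap V = \emptyset$ for every $g \in F\setminus\{e\}$, together with a bump $f \in C(X)$, $0 \leq f \leq 1$, $\supp{f}\subseteq V$, $f(x_0) = 1$. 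A direct computation then gives
\begin{equation*}
f a f = f^2 a_e + \sum_{g \notin F} f\, a_g\, \alpha_g(f)\, \delta_g \in I,
\end{equation*}
where the remainder has $\ell^1$-norm at most $\sum_{g \notin F}\|a_g\|_\infty$ (arbitrarily small for $F$ large), while $f^2 a_e \in C(X)$ always has sup-norm at least $|a_e(x_0)| > c$.

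The main obstacle is to upgrade these approximate cut-downs to an honest nonzero element of $I \cap C(X)$. My intended strategy is to prove that the conditional expectation $E: \luno \to C(X)$, $E(b) = b_e$, sends $I$ into itself; since $E$ is contractive, this immediately yields $0 \neq a_e = E(a) \in I \cap C(X)$. The route to $E(I) \subseteq I$ that I have in mind, patterned on the classical Elliott/Kishimoto argument in the $C^*$-setting, is to produce for every $\varepsilon > 0$ a convex combination $\tfrac{1}{N}\sum_{i=1}^N u_i a u_i^*$ of unitary conjugates of $a$ by unitaries $u_i \in C(X)$, approximating $E(a)$ in $\ell^1$-norm to within $\varepsilon$; since each $u_i a u_i^*$ lies in $I$ and $I$ is closed, this forces $E(a) \in I$. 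The construction of these averaging unitaries via a partition-of-unity argument localized on the dense set of aperiodic points, ensuring cancellation in each off-diagonal coefficient, is where topological freeness is used in full strength and is the main technical hurdle I anticipate.
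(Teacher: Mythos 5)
Your implications $(1)\Rightarrow(2)$ and $(2)\Rightarrow(3)$ are correct and essentially identical to the paper's argument: the paper also uses the orbit representations $\pi_x$ (induced from the trivial representation of the stabilizer) and the element $f-f\delta_{s_0}$ with $f$ supported in $\Fix{s_0}$, and concludes $I\cap C(X)=\{0\}$ by evaluating on the basis vectors. The problem is $(3)\Rightarrow(1)$: your cut-down computation $faf=f^2a_e+\sum_{g\notin F}f\,a_g\,\alpha_g(f)\,\delta_g$ is fine (and plays the same role as the paper's unimodular-averaging lemma), but the step you defer --- proving $E(I)\subseteq I$ for every closed ideal by approximating $E(a)$ in $\ell^1$-norm by averages $\tfrac{1}{N}\sum_{i=1}^N u_iau_i^*$ with unitaries $u_i\in C(X)$ --- is not just the hard part, it is provably unavailable under topological freeness alone. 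By the paper's Theorem \refeq{rtfThm}, the condition ``$E(I)\subseteq I$ for every closed ideal $I$'' is equivalent to \emph{residual} topological freeness, which is strictly stronger than topological freeness (e.g.\ a north--south homeomorphism of the circle, viewed as a $\Z$-action, is topologically free but its restriction to a fixed point is not). Moreover the averaging itself is obstructed pointwise: if $x\in\Fix{s}$ for some $s\neq e$, then for $a=f\delta_s$ every unitary $u\in C(X)$ gives $(uau^*)_s(x)=u(x)\overline{u(x)}f(x)=f(x)$, so every convex combination of conjugates has $s$-coefficient equal to $f(x)$ at $x$ and stays at $\ell^1$-distance at least $|f(x)|$ from $E(a)=0$. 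So the Elliott-type global averaging can only work when the action is actually free.

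The way to close the gap --- and what the paper does --- is to argue contrapositively and use the hypothesis $I\cap C(X)=\{0\}$ itself, rather than trying to produce an element of $I\cap C(X)$ directly. If $I\cap C(X)=\{0\}$, the quotient map $q:\luno\to\luno/I$ is injective on $C(X)$, and since any submultiplicative algebra norm on $C(X)$ dominates the sup-norm while $q$ is contractive, $q|_{C(X)}$ is isometric. Now take $a\in I$, an aperiodic point $x_0$, write $a=a_e+b+c$ with $b$ supported on a finite set $F\setminus\{e\}$ and $\lVert c\rVert<\varepsilon$, and apply your cut-down (or the paper's averaging) together with a bump $\varphi$ at $x_0$: one gets an element of $I$ of the form $\varphi a_e+\widetilde c$ with $\lVert\widetilde c\rVert<\varepsilon$, whence $\lvert a_e(x_0)\rvert\leq\lVert\varphi a_e\rVert_\infty=\lVert q(\varphi a_e)\rVert=\lVert q(\widetilde c)\rVert<\varepsilon$. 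Thus $a_e$ (and, after translating by $\delta_{s^{-1}}$, every coefficient $a_s$) vanishes on $\Aper$, which is dense by topological freeness, so $a=0$ and $I=\{0\}$. This avoids any claim that $E(a)\in I$ and is where topological freeness is actually used.
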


We point out the distinction between closed and self-adjoint ideals and just closed ideals done in (1) and (2): as we mentioned before, in this context it is not automatic that every closed ideal is self-adjoint, so the difference is meaningful. This distinction will be recurrent later in the paper. The equivalence between (1) and (3) can actually be seen as an interpretation for $\luno$ of \cite[Theorem 5.9]{twistedCrossProd}, which is a result on the more general context of twisted Banach algebra crossed products. However, the proof here is in the spirit of \cite[Theorem 4.1]{1}, of which it is a generalization for arbitrary discrete and countable (infinite) groups.

Theorem \refeq{mainThmA} proves to be an extremely useful tool. Assuming topological freeness of $\action$, it allows us to easily see how different dynamical properties of the action are characterized as analytic-algebraic properties of $\luno$. The dynamical properties we characterize are minimality, topological transitivity and residual topological freeness (Definitions \refeq{defMinimality}, \refeq{defTransitivity} and \refeq{resTopFree}, respectively); see Theorems \refeq{thmMinSimp}, \refeq{thmTransitivity} and \refeq{rtfThm}. The characterizations of minimality and topological transitivity generalize \cite[Theorems 4.2, 4.10]{1}, which were proved for $\Z$-actions. In these results, topological freeness of the action is not assumed, because for $\Z$-actions it is automatic given the minimality or topological transitivity. The need to assume topological freeness in the context of arbitrary countable discrete groups does not come as a surprise, given that it was necessary in the context of $C^*$-algebraic crossed products; see, for instance, \cite[Theorem 2]{cstarminimality}.

Some precedents show that algebras of type $L^1$ retain more information than their $C^*$ counterparts. A great example of this phenomenom is Wendel's theorem (see \cite{wendel}), which says that two locally compact Hausdorff groups are isomorphic (as topological groups) if and only if their $L^1$-group algebras are isometrically isomorphic. This means that $L^1$-group algebras retain all the information of the group, something that the $C^*$-group algebras do not satisfy as, for example, the group $C^*$-algebras $C^*(\Z_4)$ and $C^*(\Z_2 \times \Z_2)$ are isomorphic but $\Z_4$ is not isomorphic to $\Z_2\times\Z_2$\footnote{In this paper, given $n\in\N$, we refer to $\Z/n\Z$ (the integers modulo $n$) as $\Z_n$.}. For $C^*$-algebraic crossed products, the algebra does not contain that much information of the action by itself. But, given two topologically free actions (see Definition \refeq{topFree}), an isomorphism between their crossed products that preserves a canonical commutative $C^*$-subalgebra is equivalent to the actions being continuous orbit equivalent (see \cite[Theorem 1.2]{LiOrbitEquivalence}). More rigidity is seen on $L^p$-operator algebra crossed products, for $p\in [1,\infty)\setminus\{2\}$, as for two topologically free actions, an isometric isomorphism between the crossed product automatically preserves the canonical commutative subalgebra of the crossed products. Then, it holds in this context that the crossed products are isometrically isomorphic if and only if the actions are continuous orbit equivalent (see \cite[Theorem 6.7]{rigidityLp}). There are no rigidity results about $\luno$ as of now, but it is clear that all the information of the action retained by its associated $C^*$-crossed product is also retained by $\luno$, as the $C^*$-crossed product is the envelope of $\luno$. We hope for some of the results that we present as hinting that the $\ell^1$-crossed product may be even more rigid than its counterparts described above.

Let us briefly describe the structure of the paper. In Section 2 we explain the context in which we work, introducing the basic definitions and notations we use. In particular, we introduce $\luno$, our main object of study. In Section 3, we do some technical work in order to prove Theorem \refeq{mainThmA}. The characterizations of minimality, topological transitivity and residual topological freeness follow from there. In Section 4, we study how to relate non-freeness of the action with the existence of closed non-self-adjoint ideals in $\luno$. We first prove that if the action is free, then every closed ideal of $\luno$ is self-adjoint. Finally, we show how the existence of a nontrivial point-stabilizer subgroup satisfying some conditions implies the existence of a closed non-self-adjoint ideal in $\luno$ (see Proposition \refeq{propAlmFree}); this result implies Theorem \refeq{mainThmD}.

\subsection*{Acknowledgments} 
The author is deeply thankful to his late advisor Fernando Abadie, whose guidance was essential to make all this work possible.

The following work is part of the master's thesis of the author, who would also like to thank his other advisor Matilde Martínez, and Eusebio Gardella and Hannes Thiel for valuable discussions. Part of this work was done during a research visit to the Department of Mathematical Sciences at Chalmers University and the University of Gothenburg, and the author would like to thank the institute's Operator Algebras group for their hospitality.

\section{Definitions and preliminaries}
In this section we introduce the basic objects playing a role in this work, and we fix the notation that we use. In what follows, $G$ is a countable infinite discrete group and $X$ is a compact Hausdorff space. Given a subset $Y$ of $X$, we denote $\overline{Y}$ its closure in $X$.

\subsection{The algebra $\boldsymbol{\luno}$}
Let $G$ act by homeomorphisms on $X$. For every $s\in G$, we denote by $\sigma_s:X\to X$ the homeomorphism associated with the action of $s$ in $X$. We denote this action by $\action$. We denote by $e$ the identity element of $G$, and we denote by $C(X)= \{f:X\to \C ; f \text{ is continuous}\}$ the $C^*$-algebra of continuous functions on $X$ endowed with the supremum norm $\lVert\>\cdot\>\rVert_\infty$. The action of $G$ on $X$ induces an action of $G$ on $C(X)$, that we denote by $\alpha:G\to \text{Aut}(C(X))$, given by
$$\alpha_s(f)(x) = f(\sigma_s^{-1}(x))$$
for every $s\in G$, every $f\in C(X)$ and every $x\in X$.

We define 
$$\luno = \left\{ f:G\to C(X) : \sum_{s\in G} \lVert f(s) \rVert_\infty < \infty \right\}.$$
Set $\lVert f\rVert = \sum_{s\in G}\lVert f(s)\rVert_\infty$ for every $f\in \luno$. Then $\luno$, with pointwise sum and scalar multiplication, is a Banach space. We endow $\luno$ with the following product: given $f,g\in\luno$, we set
$$(fg)(s) = \sum_{t\in G}f(t)\alpha_t(g(t^{-1}s)) \text{ for every }s\in G.$$
We also define an involution $*:\luno\to\luno$ in the following way: given $f\in\luno$, we set
$$(f^*)(s) = \alpha_s(\overline{f(s^{-1})}) \text{ for every } s\in G.$$
With these product and involution we have that $\luno$ is a Banach *-algebra. We can canonically embed $C(X)$ in $\luno$ by identifying it (isometrically) with the $\ell^1$-functions $G\to C(X)$ supported on the unit $e\in G$.

We now describe a useful way to work with the elements of $\luno$. For every $s\in G$, we define $\delta_s\in\luno$ as:
\begin{equation*}
    \delta_s(s') = 
    \begin{cases}
        1 & \text{if } s' =s,\\
        0 & \text{if } s'\neq s,\\
    \end{cases}
\end{equation*}
where $1$ denotes the function taking constant value $1$. With this notation, we can describe an element $f\in\luno$ as:
$$f = \sum_{s\in G} f(s) \delta_s = \sum_{s\in G}f_s \delta_s,$$
where $f_s$ denotes $f(s)\in C(X)$ for every $s\in G$. We usually use this description of the elements of $\luno$ without explicit mention. With this notation, we have that
$$(f_s\delta_s)(g_t \delta_t) = f_s \alpha_s(g_t) \delta_{st}$$
for every $s,t\in G$, and every $f_s,g_t\in C(X)$. Also, with this notation we have that the involution is given by the rule
$$(f_s \delta_s)^* = \alpha_{s}^{-1}(\overline{f_s}) \delta_{s^{-1}} \text{ for every }s\in G \text{ and }f_s\in C(X).$$

\begin{definition}\label{projE}
    We define the canonical projection $E:\luno \to C(X)$ as the map given by $E(f) = f(e)$ for every $f\in \luno$.
\end{definition}

We denote by $\csigma$ the (universal) $C^*$-envelope of $\luno$, which is the universal crossed product $C^*$-algebra associated to $\action$.

We briefly give some definitions related to the action $\action$. Given $x\in X$, we denote by $G_x$ the \textit{stabilizer subgroup of $x$} as the subgroup of $G$ defined as $G_x = \{ s\in G : \sigma_s(x) = x\}$. Given $s\in G$, we define the set of fixed points of $s$ as $\Fix{s} = \{x\in X : \sigma_s(x) = x\}$. It is clear that it is a closed subset of $X$. We define $\Per  = \{ x\in X : \sigma_s (x) = x \text{ for some } s\in G\setminus \{e\}\}$. If $x$ is in $\Per$, we say that it is \textit{periodic}. It is clear that $x$ belongs to $\Per$ if and only if $G_x$ is not trivial. We define $\Aper$ as $X \setminus \Per$. If $x$ belongs to $\Aper$, we say that it is \textit{aperiodic} or that \textit{it has free orbit}.

\subsection{Representations of $\boldsymbol{\luno}$}\label{secRep} In this paper, by a representation of a Banach *-algebra we mean a bounded homomorphism of the algebra into the bounded operators on a Hilbert space that respects the *-operation. As such, the kernel of a representation is always a closed and self-adjoint ideal.

We introduce a family of representations of $\luno$ that is going to be of help in the following section. In \cite[Chapter 4]{tomiyamaBook1}, given $x\in X$ and a unitary representation of the isotropy subgroup $G_x$, it is shown how to obtain a representation of the universal crossed product $\csigma$. As $\csigma$ and $\luno$ have the same theory of representations, we also obtain (via restriction) a representation of $\luno$. We are interested in the representation, that we denote by $\pi_x$, obtained when the unitary representation of $G_x$ we start with is the trivial representation (on a Hilbert space of dimension 1, that is, the complex numbers $\C$). We concretely describe the representation $\pi_x$ in this case.

So, let's fix $x\in X$ and consider the isotropy subgroup $G_x$. We can describe the left quotient space $G/G_x=\{s_\beta G_x : \beta \in J_x\}$ for the set of representatives $\{s_\beta : \beta\in J_x\}$. We consider the Hilbert space $H_x$ with orthonormal basis $\{v_\beta : \beta \in J_x\}$. Then, the representation $\pi_x:\luno\to B(H_x)$ is given by the following:
\begin{itemize}
    \item Given $f\in C(X)$ and $\beta \in J_x$, we have that $\pi_x(f) v_\beta = f(\sigma_{s_\beta}(x))v_\beta$.
    \item Given $s\in G$ and $\beta \in J_x$, there exists a unique $\gamma\in J_x$ such that $ss_\beta$ belongs to the coset $s_\gamma G_x$. Then, we have that $\pi_x (\delta_s) v_\beta = v_{\gamma}$.
\end{itemize}
As $\pi_x$ is multiplicative (and linear), this determines the representation. Thus, given an element $f = \sum_{s\in G}f_s \delta_s \in \luno$, we have that
$$\pi_x \left( \sum_{s\in G} f_s \delta_s \right) = \sum_{s\in G} \pi_x (f_s) \pi_x(\delta_s) \in B(H_x).$$

We note that the representation is finite dimensional if and only if $x$ has finite orbit.

\section{The ideal intersection property and consequences}
The objective of this section is to obtain a dictionary between dynamical properties of the action $\action$ and analytic-algebraic properties of $\luno$. The main tool in order to do this is Theorem \refeq{topFreeThm}, which is our first objective. Before being able to prove this theorem, we have to do some technical work. Many of the results here are heavily inspired by \cite{1}, but generalized for an arbitrary countable infinite discrete group instead of $\Z$. 

We define the commutant of $C(X)$ as
$$C(X)' = \left\{ f\in\luno : fg =gf \text{ for every } g\in C(X) \right\}.$$
It's clear that $C(X) \subset C(X)'$. We characterize the commutant in the following proposition. We recall that, given $g\in C(X)$, by $\supp{g} = \overline{\{ x\in X : g(x)\neq0\}}$ we denote the support of $g$.

\begin{proposition}\label{charCommutant}
    We have that
    $$C(X)' = \left\{ f = \sum_{s\in G} f_s\delta_s \in \luno : \supp{f_s}\subset \Fix{s} \text{ for every }s\in G\right\}.$$
\end{proposition}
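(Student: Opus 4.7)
The plan is to work out explicitly the commutator $fg - gf$ for a general $g \in C(X)$ and characterize when it vanishes. Writing $f = \sum_{s \in G} f_s \delta_s$ and using the product rule from the preliminaries,
\[
fg = \sum_{s \in G} f_s \alpha_s(g)\, \delta_s \quad \text{and} \quad gf = \sum_{s \in G} g f_s\, \delta_s.
\]
Since $C(X)$ is commutative, I would observe that $f \in C(X)'$ if and only if, for every $s \in G$ and every $g \in C(X)$, the equality $f_s \cdot (\alpha_s(g) - g) = 0$ holds in $C(X)$, i.e., pointwise $f_s(x)\bigl(g(\sigma_s^{-1}(x)) - g(x)\bigr) = 0$ for all $x \in X$.

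For the easy ($\supseteq$) inclusion, assume $\supp{f_s} \subset \Fix{s}$ for all $s$. Given $x \in X$, either $f_s(x) = 0$, or $x \in \supp{f_s} \subset \Fix{s}$, which forces $\sigma_s^{-1}(x) = x$ and therefore $g(\sigma_s^{-1}(x)) = g(x)$. In either case, the product is zero, so $fg = gf$.

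For the nontrivial ($\subseteq$) inclusion, I would argue by contrapositive. Suppose there is an $s_0 \in G$ with $\supp{f_{s_0}} \not\subset \Fix{s_0}$. Since $\Fix{s_0}$ is closed and $\supp{f_{s_0}}$ is the closure of $\{x : f_{s_0}(x) \neq 0\}$, there must exist a point $x_0 \in X$ with $f_{s_0}(x_0) \neq 0$ and $\sigma_{s_0}^{-1}(x_0) \neq x_0$. Here is where I would use compactness and Hausdorffness of $X$: by Urysohn's lemma, pick $g \in C(X)$ with $g(x_0) = 0$ and $g(\sigma_{s_0}^{-1}(x_0)) = 1$. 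Then $f_{s_0}(x_0)\bigl(\alpha_{s_0}(g)(x_0) - g(x_0)\bigr) = f_{s_0}(x_0) \neq 0$, so $f_{s_0}\alpha_{s_0}(g) \neq g f_{s_0}$, and hence $f g \neq g f$.

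The main substantive step is the contrapositive argument, and the only subtlety is passing from the support (a closed set) back to a point where $f_{s_0}$ is nonzero; this is immediate because if every point of $\{f_{s_0} \neq 0\}$ lay in $\Fix{s_0}$, then its closure would too. Once this is settled, Urysohn's lemma finishes things off, and no further technical input is needed.
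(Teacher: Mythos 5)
Your proof is correct and follows essentially the same route as the paper: compare coefficients of $fg$ and $gf$, and use that continuous functions on a compact Hausdorff space separate points (you invoke Urysohn explicitly in contrapositive form, the paper states the separation property directly). You additionally write out the easy inclusion $\supseteq$, which the paper leaves implicit, and your remark about passing from the support to an actual nonvanishing point is the right way to handle that small subtlety.
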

\begin{proof}
    Let $f=\sum_{s\in G}f_s\delta_s \in C(X)'$. This means that, for every $g\in C(X)$, we have that $fg=gf$. We compute directly that $fg = \sum_{s\in G} f_s\alpha_s(g)\delta_s$ and that $gf = \sum_{s\in G} gf_s\delta_s$. As $\alpha_s(g) = g\circ \sigma_s^{-1}$, we must have $f_s (g \circ \sigma^{-1}_s) = f_s g$ for every $s\in G$ and for every $g\in C(X)$. Fix $s\in G$ and let $x\in X$ such that $f_s(x)\neq 0.$ Then we have that $g(\sigma_s^{-1}(x)) = g(x)$ for every $g\in C(X)$. This implies that $x = \sigma_s^{-1}(x)$, which is equivalent to $\sigma_s(x) = x$, i.e., $x$ belongs to $\Fix{s}$. This means that $\supp{f_s}\subset \Fix{s}$. 
\end{proof}

\begin{definition}\label{topFree}
    We say that the action $\action$ is free if $\Aper = X$. We say that the action $\action$ is topologically free if $\Aper$ is dense in $X$, i.e., if $\overline{\Aper} = X$.
\end{definition}

\begin{remark}\label{rmkEmptyInt}
    We can write
    $$\Aper = \bigcap_{s\in G \setminus\{e\}}\Fix{s}^c.$$
    As $X$ is compact and Hausdorff, it is a Baire space. It follows that, as $G$ is countable, we have that $\Aper$ is dense if and only if $\Fix{s}$ has empty interior for every $s\in G \setminus\{e\}$.
\end{remark}

\begin{corollary}\label{conmAndTopFree}
    The action $\action$ is topologically free if and only if $C(X)$ is equal to $C(X)'$.
\end{corollary}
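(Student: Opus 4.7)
The plan is to prove both directions by leveraging Proposition \refeq{charCommutant} and Remark \refeq{rmkEmptyInt}. Since the containment $C(X) \subset C(X)'$ is always automatic, the content lies in the reverse containment; by Proposition \refeq{charCommutant}, $C(X) = C(X)'$ precisely when, for every $f = \sum_{s\in G} f_s \delta_s$ satisfying $\supp{f_s} \subset \Fix{s}$ for all $s$, one has $f_s = 0$ for $s \neq e$. So the corollary reduces to showing that this vanishing property is equivalent to $\Fix{s}$ having empty interior for every $s \neq e$.

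For the forward direction, I would assume $\action$ is topologically free and take $f \in C(X)'$. By Proposition \refeq{charCommutant}, $\supp{f_s} \subset \Fix{s}$ for each $s$. For $s \neq e$, the open set $\{x : f_s(x) \neq 0\}$ is contained in $\Fix{s}$; but by Remark \refeq{rmkEmptyInt} this set has empty interior, forcing $f_s \equiv 0$. Hence $f = f_e \delta_e \in C(X)$.

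For the converse, I would argue by contrapositive: suppose $\action$ is not topologically free. Then by Remark \refeq{rmkEmptyInt}, there exists $s \in G \setminus \{e\}$ such that $\Fix{s}$ has nonempty interior $U$. Since $X$ is compact Hausdorff (hence normal), Urysohn's lemma produces a nonzero $f_s \in C(X)$ with $\supp{f_s} \subset U \subset \Fix{s}$. By Proposition \refeq{charCommutant}, the element $f_s \delta_s$ lies in $C(X)'$, but it is not in $C(X)$ because $s \neq e$, witnessing strict containment $C(X) \subsetneq C(X)'$.

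There is no serious obstacle here: the corollary is essentially an immediate translation of the characterization in Proposition \refeq{charCommutant} combined with the Baire-category reformulation of topological freeness in Remark \refeq{rmkEmptyInt}. The only mild point to be careful about is choosing the witness $f_s$ in the converse so that its support actually sits inside $\Fix{s}$ (not merely meets it), which is why one applies Urysohn's lemma inside the open set $U$ rather than directly.
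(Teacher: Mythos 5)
Your proof is correct and follows essentially the same route as the paper: both directions reduce to Proposition \refeq{charCommutant} together with Remark \refeq{rmkEmptyInt}, using that a nonzero continuous function has support with nonempty interior (your forward direction) and a Urysohn function supported inside a fixed-point set with nonempty interior (your converse). The paper compresses this into one line, but the underlying argument is identical.
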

\begin{proof}
    We just note that a nonzero continuous function has support with nonempty interior. Then, the result follows from the above remark and Proposition \refeq{charCommutant}.
\end{proof}

We now prove a couple of technical lemmas that we need in order to prove Theorem \refeq{topFreeThm}.

\begin{lemma}\label{unimFunct}
    Let $x\in X$, and let $k_1,\ldots,k_N \in G$ such that $\sigma_{k_i}(x) \neq x$ for every $i=1,\ldots,N$. Then, there exists $U$ an open neighborhood of $x$ and unimodular functions $\theta_1,\ldots,\theta_{2^N}\in C(X)$ that satisfy the following property: if $f=\sum_{s\in G} f_s \delta_s$ is an element of $\luno$, and we consider
    $$f' = \frac{1}{2^N}\sum_{l=1}^{2^N}\theta_l f \overline{\theta_l},$$
    then, if we write $f' = \sum_{s\in G}f_s' \delta_s$, it verifies that:
    \begin{enumerate}
        \item $f_e' = f_e$,
        \item $f_{k_l}'(y) = 0$ for every $y\in U$ and every $l=1,\ldots,N.$
    \end{enumerate}
\end{lemma}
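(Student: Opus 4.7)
The plan is to exploit a clever averaging trick based on a product-over-sums identity. The key observation is that for any unimodular $\theta \in C(X)$, a direct calculation gives
$$(\theta f \overline{\theta})_s(y) \;=\; \theta(y)\,\overline{\theta(\sigma_s^{-1}(y))}\,f_s(y),$$
so the $e$-coefficient is automatically preserved (the multiplier becomes $|\theta(y)|^2 = 1$). Hence condition (1) holds for \emph{any} choice of unimodular $\theta_l$'s, and the whole problem reduces to producing $\theta_1, \ldots, \theta_{2^N}$ such that for each $j \in \{1,\ldots,N\}$ the averaged multiplier
$$\Lambda_j(y) \;:=\; \frac{1}{2^N}\sum_{l=1}^{2^N}\theta_l(y)\,\overline{\theta_l(\sigma_{k_j}^{-1}(y))}$$
vanishes for every $y \in U$.

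First I would construct the neighborhood $U$. Since $\sigma_{k_i}(x) \neq x$ and $X$ is compact Hausdorff (hence normal), for each $i$ one finds disjoint open neighborhoods of $x$ and of $\sigma_{k_i}^{-1}(x)$, shrinks them by regularity so their closures are disjoint, and intersects the first with the $\sigma_{k_i}$-translate of the second, obtaining an open $U_i \ni x$ with $\overline{U_i} \cap \sigma_{k_i}^{-1}(\overline{U_i}) = \emptyset$. Setting $U := \bigcap_{i=1}^N U_i$, one has $\overline{U}$ and $\sigma_{k_i}^{-1}(\overline{U})$ disjoint closed sets for every $i$. By Urysohn's lemma there exists a continuous $\phi_i \colon X \to [0,1]$ with $\phi_i \equiv 0$ on $\overline{U}$ and $\phi_i \equiv 1$ on $\sigma_{k_i}^{-1}(\overline{U})$. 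Defining $\eta_i := e^{i\pi \phi_i} \in C(X)$ yields $N$ continuous unimodular functions satisfying $\eta_i \equiv 1$ on $\overline{U}$ and $\eta_i \equiv -1$ on $\sigma_{k_i}^{-1}(\overline{U})$.

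Now I enumerate the $2^N$ subsets $S \subseteq \{1,\ldots,N\}$ and define $\theta_S := \prod_{i \in S} \eta_i \in C(X)$, producing $2^N$ unimodular continuous functions. The standard identity $\sum_{S \subseteq \{1,\ldots,N\}} \prod_{i \in S} a_i = \prod_{i=1}^N (1 + a_i)$, applied with $a_i = \eta_i(y)\,\overline{\eta_i(\sigma_{k_j}^{-1}(y))}$, gives
$$\Lambda_j(y) \;=\; \frac{1}{2^N}\prod_{i=1}^N\Bigl(1 + \eta_i(y)\,\overline{\eta_i(\sigma_{k_j}^{-1}(y))}\Bigr).$$
For $y \in U$ and $i = j$ one has $\eta_j(y) = 1$ and $\sigma_{k_j}^{-1}(y) \in \sigma_{k_j}^{-1}(\overline{U})$, so $\eta_j(\sigma_{k_j}^{-1}(y)) = -1$; thus the $j$-th factor equals $1 + (-1) = 0$, making the whole product vanish on $U$, which yields $f'_{k_j} \equiv 0$ on $U$.

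The main obstacle is \emph{discovering} the correct structure for the $\theta_l$'s: recognizing that indexing them by subsets of $\{1,\ldots,N\}$ makes the averaged multiplier factor as $\prod_i(1+\cdots)$, so that arranging a single sign-flip condition $\eta_j(x) = -\eta_j(\sigma_{k_j}^{-1}(x))$ (which one can enforce \emph{separately} for each $j$) is enough to annihilate every $\Lambda_j$ at once. Once this is seen, the Hausdorff/normality separation producing $U$ and the Urysohn construction of the $\eta_i$ are routine.
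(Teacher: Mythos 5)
Your proof is correct and rests on essentially the same idea as the paper's: averaging $f$ over $2^N$ unimodular multipliers built from $N$ basic unimodular functions that take prescribed constant values on a neighborhood of $x$ and on its $\sigma_{k_i}^{-1}$-translate, so that the resulting multiplier of $f_{k_j}$ factors and one factor vanishes on $U$. The only difference is organizational: the paper runs an induction on $N$, pairing each basic function $\theta_0$ (valued $1$ near $x$ and $-i$ on the translate) with its conjugate and using $\mathrm{Re}(i)=0$, whereas you index the $\theta_l$ by subsets of $\{1,\ldots,N\}$ and use the identity $\sum_{S}\prod_{i\in S}a_i=\prod_i(1+a_i)$ together with $1+(-1)=0$; the two bookkeeping schemes produce the same conclusion.
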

\begin{proof}
    Let $f=\sum_{s\in G}f_s\delta_s \in \luno$ and let $h\in C(X)$. A direct computation shows that 
    \begin{equation}\label{eqLemma}
        \frac{1}{2}\left( hf\overline{h} + \overline{h}fh\right) = \sum_{s\in G} f_s \text{Re}\left(h\alpha_s \left( \overline{h} \right) \right) \delta_s.
    \end{equation}

    Let us now prove the lemma. We prove it by induction on $N$. Let $N=1$, we have $k_1 \in G$ such that $x\neq \sigma_{k_1}(x)$. We can take an open neighborhood $U$ of $X$ and an unimodular function $\theta_0$ such that $\theta_0$ is equal to $1$ in $U$ and  equal to $-i$ in $\sigma_{k_1}^{-1}(U)$. We set $\theta_1 = \theta_0$ and $\theta_2 = \overline{\theta_0}$. Take $f=\sum_{s\in G}f_s \delta_s\in\luno$. By Equation (\refeq{eqLemma}), we have that
    \begin{align*}
        \begin{split}
            f' = \frac{1}{2} \left( \theta_1 f \overline{\theta_1} + \theta_2 f \overline{\theta_2} \right) = \frac{1}{2} \left( \theta_0 f \overline{\theta_0} + \overline{\theta_0} f \theta_0 \right) = \sum_{s\in G} f_s \text{Re}\left( \theta_0 \alpha_s\left( \overline{ \theta_0 }\right)\right)\delta_s.
        \end{split}
    \end{align*}
    Then, it follows:
     \begin{enumerate}
        \item $f_e' = f_e \text{Re} \left( \theta_0 \alpha_e\left( \overline{\theta_0} \right)\right) = f_e \text{Re} \left( \theta_0 \overline{\theta_0} \right) =  f_e$ because $\theta_0$ is unimodular;
        \item For every $y\in U$: 
        \begin{align*}
            \begin{split}
                f_{k_1}'(y) &= f_{k_1}(y) \text{Re}\left( \theta_0(y) \alpha_{k_1}\left( \overline{\theta_0} (y) \right) \right) = f_{k_1}(y) \text{Re} \left( \theta_0(y) \overline{\theta_0} \left( \sigma_{k_1}^{{-1}}(y) \right) \right)\\ &= f_{k_1}(y) \text{Re}(i) = 0.
            \end{split}
        \end{align*}
    \end{enumerate}

    Now we do the inductive step. Suppose we have $\sigma_{k_1}(x),\ldots, \sigma_{k_{N-1}}(x)$, all distinct from $x$, and we have unimodular functions $\widetilde{\theta}_1, \ldots, \widetilde{\theta}_{2^{N-1}}\in C(X)$ and an open set $\widetilde U$ of $x$ such that if $f=\sum_{s\in G}f_s \delta_s$ and we denote 
    $$\widetilde f = \frac{1}{2^{N-1}} \sum_{l=1}^{2^{N-1}} \widetilde\theta_l f \overline{\widetilde{\theta}_l} = \sum_{s\in G} \widetilde{f}_s \delta_s,$$
    then:
        \begin{enumerate}
        \item $\widetilde{f}_e = f_e$;
        \item $\widetilde{f}_{k_l}(y) = 0$ for every $y\in \widetilde U$ and every $l=1,\ldots,N-1.$
    \end{enumerate}
    
    We have $\sigma_{k_N}(x)\neq x$, so we can take an open neighborhood $V$ of $x$ and a unimodular function $\theta_0$ such that $\theta_0$ is equal to $1$ in $V$ and equal to $-i$ in $\sigma_{k_N}^{-1}(U)$. We set $U=\widetilde{U} \cap V$. We have
    \begin{align*}
        \begin{split}
            \frac{1}{2}\left( \theta_0 \widetilde f \overline{\theta}_0 + \overline{\theta}_0 \widetilde f \theta_0 \right) &= \frac{1}{2}\left( \theta_0 \left[ \frac{1}{2^{N-1}} \sum_{l=1}^{2^{N-1}} \widetilde\theta_l f \overline{\widetilde{\theta}_l} \right] \overline{\theta_0} + \overline{\theta_0}\left[ \frac{1}{2^{N-1}} \sum_{l=1}^{2^{N-1}} \widetilde\theta_l f \overline{\widetilde{\theta}_l} \right] \theta_0 \right)\\
            & = \frac{1}{2^N} \left( \sum_{l=1}^{2^{N-1}} (\theta_0 \widetilde{\theta}_l) f \overline{( \theta_0\widetilde{\theta}_l)} + \sum_{l=1}^{2^{N-1}} (\overline{\theta_0} \widetilde{\theta_l} ) f \overline{(\overline{\theta_0} \widetilde{\theta_l})} \right); \\
        \end{split}
    \end{align*}
    setting $\theta_l = \theta_0\widetilde\theta_l$ for every $l=1,\ldots,2^{N-1}$ and $\theta_l = \overline{\theta_0}\widetilde\theta_{l-2^{N-1}}$ for every $l=2^{N-1}+1,\ldots, 2^N$, we get from the above equation that
    $$f'= \frac{1}{2^N} \sum_{l=1}^{2^N} \theta_l f \overline{\theta_l} = \frac{1}{2}\left( \theta_0 \widetilde f \overline{\theta}_0 + \overline{\theta}_0 \widetilde f \theta_0 \right).$$
    Now, using Equation (\refeq{eqLemma}), we have that
    \begin{align*}
        \begin{split}
            f' = \frac{1}{2}\left( \theta_0 \widetilde f \overline{\theta}_0 + \overline{\theta}_0 \widetilde f \theta_0 \right) = \sum_{s\in G} \widetilde{f_s}\text{Re}\left( \theta_0 \alpha_s\left(  \overline{\theta_0}\right)\right)\delta_s.
        \end{split}
    \end{align*}
    Finally, writing $f' = \sum_{s\in G}f_s' \delta_s$, we obtain:
    \begin{enumerate}
        \item $f_e' = \widetilde{f_e} \text{Re}\left( \theta_0 \overline{\theta_0}\right)= f_e$ as $\theta_0$ is unimodular;
        \item If $y\in U$ and $l=1,\ldots, N-1$, we have that
        \begin{align*}
            \begin{split}
                f_{k_l}'(y) = \widetilde{f}_{k_l}(y) \text{Re} \left( \theta_0(y) \alpha_{k_l} \left( \overline{\theta_0} \right)(y)\right) = 0 
            \end{split}
        \end{align*}
        given that if $y\in U \subset \widetilde{U}$ then $\widetilde{f}_{k_l}(y) =0;$ 
        \item If $y\in U$, as $U\subset V$, we have
        $$f_{k_N}(y) = \widetilde{f}_{k_N}(y) \text{Re} \left( \theta_0 \overline{\theta_0}\left( \sigma_{k_N}^{{-1}}(y) \right)\right) = \widetilde{f}_{k_N}(y) \text{Re}\left( i \right) = 0.$$
    \end{enumerate}
    This ends the proof.
\end{proof}

\begin{lemma}\label{intConm}
    Suppose that $I$ is a closed ideal of $\luno$ such that $I\cap C(X) = \{0\}$. Then, for every $f=
    \sum_{s\in G}f_s\delta_s \in I$, we have that $f_s$ vanishes on $\Aper$ for every $s\in G$.
\end{lemma}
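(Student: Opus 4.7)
My plan is to argue by contradiction. First I reduce to the case $s = e$: for any $s_0 \in G$, the element $f \delta_{s_0^{-1}}$ lies in $I$ since $I$ is a right ideal, and a direct computation gives $(f \delta_{s_0^{-1}})_e = f_{s_0}$. Hence it suffices to prove that $f_e(x_0) = 0$ for every $f \in I$ and every $x_0 \in \Aper$. Suppose, for contradiction, that some $f \in I$ and $x_0 \in \Aper$ satisfy $f_e(x_0) \neq 0$.

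The core construction produces, for each $n$, an element of $I$ that is close in $\luno$-norm to a diagonal element not vanishing at $x_0$. Since $x_0$ is aperiodic, $\sigma_s(x_0) \neq x_0$ for every $s \in G \setminus \{e\}$, so Lemma \refeq{unimFunct} is applicable to any finite subset of $G \setminus \{e\}$. Enumerate $G \setminus \{e\} = \{s_1, s_2, \ldots\}$. For each $n$, applying Lemma \refeq{unimFunct} with $k_i = s_i$ for $i = 1, \ldots, n$ yields an open neighbourhood $U_n$ of $x_0$ and unimodular functions $\theta_1^{(n)}, \ldots, \theta_{2^n}^{(n)} \in C(X)$ such that the average $f'_n := \frac{1}{2^n} \sum_l \theta_l^{(n)} f \overline{\theta_l^{(n)}}$ belongs to $I$ (because $I$ is two-sided and each $\theta_l^{(n)} \in C(X) \subset \luno$), satisfies $(f'_n)_e = f_e$, and has $(f'_n)_{s_i}|_{U_n} = 0$ for $i \leq n$. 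Choosing $h_n \in C(X)$ with $\supp{h_n} \subset U_n$, $0 \leq h_n \leq 1$, and $h_n(x_0) = 1$, and setting $g_n := h_n f'_n h_n \in I$, one computes directly that $(g_n)_e = h_n^2 f_e$, that $(g_n)_{s_i} \equiv 0$ on $X$ for $i \leq n$, and that $\|(g_n)_s\|_\infty \leq \|f_s\|_\infty$ for the remaining $s$. Consequently $\|g_n - (h_n^2 f_e) \delta_e\|_{\luno} \leq \sum_{i > n} \|f_{s_i}\|_\infty \to 0$ as $n \to \infty$, while $(g_n)_e(x_0) = f_e(x_0) \neq 0$.

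The main obstacle is extracting a contradiction: the candidate limits $c_n := h_n^2 f_e$ do not converge in $C(X)$ in general, because the neighbourhoods $U_n$ typically shrink to $\{x_0\}$, so one cannot directly conclude that $I \cap C(X)$ contains a nonzero element. My plan to finish is first to reduce to the case of a $*$-closed ideal: the intersection $I \cap I^*$ is a closed, two-sided, $*$-closed ideal still disjoint from $C(X)$, and for every $f \in I$ the positive element $f^* f$ belongs to $I \cap I^*$. If the lemma is known for $*$-closed ideals, the identity
\[
    (f^* f)_e(x) = \sum_{r \in G} |f_r(\sigma_r(x))|^2
\]
together with the $G$-invariance of $\Aper$ then forces $f_r|_{\Aper} = 0$ for every $r \in G$.

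The $*$-closed case is where I expect the real work to lie. The plan here is to pass to the $C^*$-envelope $\csigma$ and establish the spectral-invariance-type statement $\overline{I}^{\csigma} \cap C(X) = \{0\}$. Once this is granted, the induced map $C(X) \hookrightarrow \csigma / \overline{I}^{\csigma}$ is an injective $*$-homomorphism between $C^*$-algebras and is therefore isometric, so $\|c_n\|_\infty \leq \|g_n - c_n \delta_e\|_{\csigma} \leq \|g_n - c_n \delta_e\|_{\luno} \to 0$ contradicts $\|c_n\|_\infty \geq |f_e(x_0)| > 0$. I expect the proof of $\overline{I}^{\csigma} \cap C(X) = \{0\}$ itself to rely on an averaging argument built from Lemma \refeq{unimFunct}, now exploiting $*$-closedness of $I$ to approximate the canonical conditional expectation by convex combinations of unitary conjugations lying in $I$.
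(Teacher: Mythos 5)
Your reduction to the case $s=e$, the averaging over the unimodular functions from Lemma \refeq{unimFunct}, and the localization by bump functions $h_n$ are all correct, and up to the construction of $g_n\in I$ with $\lVert g_n-h_n^2f_e\,\delta_e\rVert\to 0$ and $h_n^2f_e(x_0)=f_e(x_0)\neq 0$ you are following essentially the same route as the paper. The gap is in how you propose to finish. Your argument hinges entirely on the unproven claim that $\overline{I}^{\,\csigma}\cap C(X)=\{0\}$, which you only conjecture and sketch ("approximate the canonical conditional expectation by averages of unitary conjugations"). This claim is not obviously easier than the lemma itself: the lemma must hold for arbitrary actions, including non--topologically free ones, and for such actions averaging by unimodular functions only suppresses finitely many coefficients locally near an aperiodic point, so there is no evident global control of the conditional expectation on $\csigma$; the standard arguments for statements of this type (Archbold--Spielberg/Exel style) assume topological freeness, which you cannot use here without circularity with Theorem \refeq{topFreeThm}. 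The preliminary reduction to self-adjoint ideals via $I\cap I^*$ and $f^*f$ is correct (and the identity for $(f^*f)_e$ together with $G$-invariance of $\Aper$ does recover the general case), but it is an extra layer that the paper never needs.

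The paper closes exactly the difficulty you identified (the candidates $h_n^2f_e$ do not converge in $C(X)$) by a more elementary device that requires neither self-adjointness nor the $C^*$-envelope: since $I\cap C(X)=\{0\}$, the quotient map $q:\luno\to\luno/I$ is injective on $C(X)$, so $\lVert\,\cdot\,\rVert\circ q$ is an algebra norm on $C(X)$; by the minimality of the supremum norm among algebra norms on $C(X)$ (\cite[Theorem 1.2.4]{normC(X)}) together with contractivity of $q$, the restriction $q|_{C(X)}$ is isometric. With this in hand your construction finishes immediately: since $g_n\in I$, one has $q(h_n^2f_e\,\delta_e)=q(h_n^2f_e\,\delta_e-g_n)$, hence $|f_e(x_0)|\le\lVert h_n^2f_e\rVert_\infty=\lVert q(h_n^2f_e\,\delta_e)\rVert\le\lVert h_n^2f_e\,\delta_e-g_n\rVert\to 0$, a contradiction. (The paper phrases this with an $\varepsilon$-truncation $f=f_e+b+c$, $\lVert c\rVert<\varepsilon$, rather than your enumeration of $G\setminus\{e\}$, but that difference is immaterial.) Substituting this quotient-norm argument for your spectral-invariance step would repair the proof; as written, the proposal leaves the essential analytic point unestablished.
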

\begin{proof}
    Let $f = \sum_{s\in G} f_s \delta_s \in I$. We want to prove that $f_s(x) = 0$ for every $x\in \Aper$ and for every $s\in G$. It is enough to prove that $f_e(x) = 0$ for every $x\in\Aper$ as, given that $I$ is an ideal, we have that $f \delta_{s^{-1}}$ belongs to $I$ for every $s\in G$. We fix $x\in\Aper$.

    We consider the quotient $\luno / I$. It is a Banach algebra with the following norm:
    $$\lVert h+I \rVert = \inf_{j\in I}\lVert h+j\rVert \text{ for every } h\in\luno.$$
    Let $q:\luno\to\luno/I$ be the canonical projection, which is clearly a contraction. We have that $I \cap C(X) = \{0\}$, so we can define a norm in $C(X)$ using the norm in the quotient: we set $\lVert\>\cdot\>\rVert' = \lVert\>\cdot\>\rVert \circ q $. This norm turns $C(X)$ into a normed algebra, so by \cite[Theorem 1.2.4]{normC(X)} we have that $\lVert\>\cdot\>\rVert_\infty \leq \lVert\>\cdot\>\rVert'$. But, as $q$ is contractive, we also have that $\lVert\>\cdot\>\rVert'\leq \lVert\>\cdot\>\rVert_\infty$, so the two norms must coincide. It follows that the restriction of $q$ to $C(X)$ is an isometry.

    Let $\varepsilon > 0$. We can take a finite subset $F\subset G\setminus\{e\}$ such that
    $$\lVert f-(f_e + \sum_{s\in F} f_s\delta_s )\rVert < \varepsilon.$$
    We set $b$ as $\sum_{s\in F}f_s\delta_s$ and we set $c$ as $f-(f_e + b)$, so we can write $f = f_e + b + c$, with $\lVert c\rVert < \varepsilon$. As $F$ is finite, we have $F = \{k_1\ldots,k_N\}$, and as $x$ belongs to $\Aper$ we have that $\sigma_{k_i}(x) \neq x$ for every $i=1\ldots,N$. Using Lemma \refeq{unimFunct} we obtain unimodular functions $\theta_1,\ldots,\theta_M \in C(X)$ and an open neighborhood $U$ of $x$ such that if we write
    $$f' = \frac{1}{M}\sum_{l=1}^M \theta_l f \overline{\theta_l} = \sum_{s\in G}f'_s\delta_s$$
    then $f_e' = f_e$ and $f_{k_i}'(y) = 0$ for every $y\in U$ and every $i=1,\ldots, N$. It is clear that $f'$ belongs to $I$. We take $\varphi\in C(X)$, with $0\leq \varphi\leq 1$, supported in $U$, and such that $\varphi(x) = 1$. We set $\widetilde f = \varphi f'$, so we have that $\widetilde f$ belongs to $I$. We can write $\widetilde f = \sum_{s\in G} \widetilde f_s \delta_s$, and we have that $\widetilde f_e(x) = \varphi(x) f_e'(x) = f_e(x)$.

    Now, we have the following, where we are using that the functions $\theta_l$ are unimodular in order to get the first summand in the last equality:
    \begin{align*}
        \begin{split}
            \widetilde f &= \varphi f' = \frac{1}{M}\sum_{l=1}^M \varphi\theta_l f \overline{\theta_l}\\
            &=\frac{1}{M}\sum_{l=1}^M\varphi\theta_l f_e \overline{\theta_l} + \frac{1}{M}\sum_{l=1}^M\varphi\theta_l b \overline{\theta_l} + \frac{1}{M}\sum_{l=1}^M\varphi\theta_l c \overline{\theta_l}\\
            &= \varphi f_e + \frac{1}{M}\sum_{l=1}^M\varphi\theta_l b \overline{\theta_l} + \frac{1}{M}\sum_{l=1}^M\varphi\theta_l c \overline{\theta_l}.
        \end{split}
    \end{align*}
    We have that $b=\sum_{s\in F} f_s \delta_s$, so using that $\varphi$ is supported in $U$ and that the functioms $f'_{k_i}$ vanish in $U$ for every $i=1\ldots,N$, we obtain
    $$\frac{1}{M}\sum_{l=1}^M \varphi \theta_l b \overline{\theta_l} = \sum_{s\in F} \varphi f_s'\delta_s = \sum_{i=1}^N \varphi f_{k_i}'\delta_{k_i} = 0.$$
    We have then that
    $\widetilde f = \varphi f_e + \frac{1}{M}\sum_{l=1}^M\varphi\theta_l c \overline{\theta_l}.$
    If we set $\widetilde c = \frac{1}{M}\sum_{l=1}^M \varphi \theta_l c \overline{\theta_l}$, we compute
    $$\lVert \widetilde c \rVert = \lVert \frac{\varphi}{M} \sum_{l=1}^M \theta_l c \overline{\theta_l} \rVert \leq \frac{1}{M} \sum_{l=1}^M \lVert \theta_l \rVert \lVert c \rVert \lVert \overline{\theta_l} \rVert = \frac{M}{M} \lVert c \rVert < \varepsilon.$$ 
    Now, as $\widetilde f$ is in $I$, we have $q(\widetilde f) = 0$. This implies that $q(\varphi f_e) = - q(\widetilde c)$. Then, as $q$ is a contraction and its restriction to $C(X)$ is an isometry, we obtain that
    \begin{align*}
        \begin{split}
            |f_e(x)| = |\varphi(x) f_e(x)| \leq \lVert \varphi f_e \rVert_\infty = \lVert q(\varphi f_e)\rVert = \lVert q(\widetilde c) \rVert \leq \lVert \widetilde c\rVert < \varepsilon.
        \end{split}
    \end{align*}
    As $\varepsilon$ was arbitrary, we conclude that $f_e(x)=0$.
\end{proof}

Finally, we arrive at the main theorem of this section. We point that equivalence between (1) and (3) in the theorem can be seen as an interpretation for $\luno$ of the more general context \cite[Theorem 5.9]{twistedCrossProd}, but the proof we include here is following \cite[Theorem 4.1]{1}, of which this result is a generalization to arbitrary countable (infinite) discrete groups.
In order to prove this result we make use of the family of representations $\{\pi_x : \luno \to B(H_x), x\in X\}$ that we introduced in the preliminaries (see Section \ref{secRep}).

\begin{theorem}\label{topFreeThm}
    The following are equivalent:
    \begin{enumerate}[(1)]
        \item For every closed nonzero ideal $I$ of $\luno$ the intersection $I\cap C(X)$ is nonzero.
        \item For every closed and self-adjoint nonzero ideal of $\luno$ the intersection $I\cap C(X)$ is nonzero.
        \item The action $\action$ is topologically free.
    \end{enumerate}
\end{theorem}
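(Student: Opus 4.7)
The plan is to establish the implications $(1) \Rightarrow (2)$ (trivial), $(3) \Rightarrow (1)$ via Lemma \ref{intConm}, and $(2) \Rightarrow (3)$ in contrapositive form using the family of representations $\{\pi_x\}$ from Section \ref{secRep}. The implication $(1) \Rightarrow (2)$ is immediate, since a closed self-adjoint ideal is a fortiori a closed ideal.

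For $(3) \Rightarrow (1)$, I will assume the action is topologically free, take a nonzero closed ideal $I \subset \luno$, and argue by contradiction assuming $I \cap C(X) = \{0\}$. Lemma \ref{intConm} then tells us that for every $f = \sum_s f_s \delta_s \in I$ and every $s \in G$, the coefficient $f_s$ vanishes on $\Aper$. By topological freeness $\Aper$ is dense in $X$, so continuity forces $f_s \equiv 0$ for every $s$; hence $f = 0$, contradicting $I \neq \{0\}$.

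The main work is $(2) \Rightarrow (3)$. Assuming the action is not topologically free, I will produce a closed self-adjoint nonzero ideal $I$ with $I \cap C(X) = \{0\}$. By Remark \ref{rmkEmptyInt}, there exists $s \in G \setminus \{e\}$ such that $\Fix{s}$ has nonempty interior; pick a nonempty open set $U \subset \Fix s$ and a nonzero $\varphi \in C(X)$ with $\supp \varphi \subset U$, and set $a := \varphi \delta_s - \varphi \in \luno$. I take $I := \bigcap_{x \in X} \ker \pi_x$. Since each $\ker \pi_x$ is a closed self-adjoint ideal, so is $I$. That $I \cap C(X) = \{0\}$ is immediate: if $f \in C(X)$ satisfies $\pi_x(f) = 0$ for every $x$, then choosing a coset representative $s_\beta \in G_x$ (so that $\sigma_{s_\beta}(x) = x$) yields $\pi_x(f) v_\beta = f(x) v_\beta = 0$, whence $f \equiv 0$.

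It remains to verify $a \in I$, i.e., $\pi_x(a) = 0$ for every $x \in X$, which is the main obstacle. Using the explicit formula for $\pi_x$, one computes for each $\beta \in J_x$ that $\pi_x(a) v_\beta = \varphi(\sigma_{s_\gamma}(x)) v_\gamma - \varphi(\sigma_{s_\beta}(x)) v_\beta$, where $\gamma \in J_x$ is determined by $s s_\beta \in s_\gamma G_x$. If $\gamma = \beta$ the two terms coincide and cancel. If $\gamma \neq \beta$, then $s_\beta^{-1} s s_\beta \notin G_x$, equivalently $\sigma_{s_\beta}(x) \notin \Fix s \supset \supp \varphi$, so the second coefficient vanishes. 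For the first, I will argue by contradiction: if $\sigma_{s_\gamma}(x) \in \Fix s$, i.e., $s_\gamma^{-1} s s_\gamma \in G_x$, combining with the defining relation $s_\gamma^{-1} s s_\beta \in G_x$ gives $s_\beta^{-1} s_\gamma \in G_x$, forcing $\gamma = \beta$, a contradiction. Hence $\sigma_{s_\gamma}(x) \notin \supp \varphi$ too, and both coefficients vanish, completing the verification that $\pi_x(a) v_\beta = 0$ and thus $a \in I \setminus \{0\}$.
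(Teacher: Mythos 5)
Your proof is correct and takes essentially the same route as the paper: $(1)\Rightarrow(2)$ and $(3)\Rightarrow(1)$ (via Lemma \ref{intConm} and density of $\Aper$) are handled identically, and for $(2)\Rightarrow(3)$ you use the same representations $\pi_x$ and the same element $\varphi\delta_s-\varphi$ supported in the interior of $\Fix{s}$. The only cosmetic differences are that you take $I=\bigcap_{x\in X}\ker\pi_x$ instead of the closed self-adjoint ideal generated by that element, and you exclude $\sigma_{s_\gamma}(x)\in\Fix{s}$ by a direct coset computation rather than by invariance of $\Fix{s}$ under $\sigma_s$; both amount to the same verification.
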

\begin{proof}
    It is clear that (1) implies (2). We prove first that (2) implies (3). Suppose that the action $\action$ is not topologically free. Then, by Remark \refeq{rmkEmptyInt}, there exists some $s_0 \in G\setminus\{e\}$ such that $\Fix{s_0}$ has nonempty interior. We can then take a nonzero $f\in C(X)$ such that $\supp{f}$ is contained in $\Fix{s_0}$. We define $I$ as the closed and self-adjoint ideal of $\luno$ generated by $f-f\delta_{s_0}$. 

    Let $x\in X$, and we consider $\pi_x : \luno \to B(H_x)$ the representation associated to $x$. We remember that $H_x$ is the Hilbert space with orthonormal basis $\{v_\beta : \beta \in J_x\}$, where $\{s_\beta: \beta\in J_x\}$ is a set of representatives of the quotient $G/G_x =\{s_\beta G_x : \beta \in J_x\}$. We will prove that $\pi_x$ vanishes at $f-f\delta_{s_0},$ which implies that $\pi_x$ vanishes on $I$. Let $\beta \in J_x$, we have that there are unique $\gamma \in J_x$ and $s'\in G_x $ such that $s_0 s_\beta = s_\gamma s' \in s_\gamma G_x$. Then:
    \begin{align*}
        \begin{split}
            \pi_x(f-f\delta_{s_0}) v_\beta &= \pi_x(f)v_\beta - \pi_x(f)\pi_x(\delta_{s_0})v_\beta
            = f(\sigma_{s_\beta} ( x)) v_\beta - \pi_x(f) v_\gamma\\
            &= f(\sigma_{s_\beta} (x))v_\beta - f(\sigma_{s_\gamma} (x))v_\gamma.
        \end{split}
    \end{align*}
    Suppose first that $\sigma_{s_\beta}(x)$ is in $\Fix{s_0}$. Then, we have that 
    \begin{align*}
        \begin{split}
            \sigma_{s_\beta}  (x) = \sigma_{s_0}(  \sigma_{s_\beta}(x)) = \sigma_{s_o s_\beta} (x) = \sigma_{s_\gamma} (\sigma_{s'}(x)) = \sigma_{s_\gamma} (x),
        \end{split}
    \end{align*}
    and as $\{s_\beta: \beta \in J_x\}$ are representatives of $G/G_x$, it must be that $\beta$ equals $\gamma$. Then, it's clear that $\pi_x(f-f\delta_{s_0}) v_\beta = 0$. Suppose now that $\sigma_{s_\beta}(x)$ is not in $\Fix{s_0}$. As $f$ is supported on $\Fix{s_0}$, we have that $f(\sigma_{s_\beta}(x)) = 0$. We can write $s_\gamma = s_0 s_\beta (s')^{-1}$. Then
    $$f(\sigma_{s_\gamma} (x)) = f(\sigma_{s_0} (\sigma_{s_\beta} (\sigma_{s'}^{-1}(x)))) = f(\sigma_{s_0}(\sigma_{s_\beta}  (x))).$$
    As $\Fix{s_0}$ is invariant under $\sigma_{s_0}$, we have that $\sigma_{s_0}(\sigma_\beta(x))$ is not in $\Fix{s_0}$ and, again, as $f$ is supported on $\Fix{s_0}$, it must be that $f(\sigma_{s_0}(\sigma_{s_\beta}(x))) = 0.$ It follows that $\pi_x(f-f\delta_{s_0})v_\beta = 0$ in both cases. As this stands true for every $\beta \in J_x$, we have that $\pi_x$ vanishes at $f-f\delta_{s_0}$, and as a consequence it also vanishes on $I$.

    Let $g\in I\cap C(X)$. Then, for every $x\in X$ we have that $0=\pi_x(g)v_e = g(x) v_e$, so $g(x) =0$. This means that $g=0$, so $I$ is a closed and self-adjoint nonzero ideal such that $I\cap C(X)=\{0\}$. This contradicts (2). 

    Now, let's prove that (3) implies (1). Let $I$ be a closed ideal of $\luno$ such that $I\cap C(X) = \{0\}$. Let $f = \sum_{s\in G}f_s\delta_s\in I$. Because of Lemma \refeq{intConm} we have that $f_s$ vanishes on $\Aper$, for every $s\in G$. As $\action$ is topologically free we have that $\Aper$ is dense in $X$, it follows that $f_s = 0$ for every $s\in G$, so $f=0$. This implies that $I=\{0\}$.
\end{proof}

Theorem \refeq{topFreeThm} allows us to relatively easily characterize dynamical properties of $\action$ as analytic-algebraic properties of $\luno$, and vice versa. We do so in the rest of this section. The proofs of many of these results are heavily inspired by the ones in \cite{1} as, given that we have Theorem \refeq{topFreeThm} at our disposal, the arguments are particularly similar now.

\begin{definition}\label{defMinimality}
    We say that the action $\action$ is minimal if there are no proper closed subsets of $X$ that are invariant under the action of $G$. This is equivalent to the orbit of every element being dense in $X$.
\end{definition}

Our next objective is to characterize the action being minimal as an analytic-algebraic property of $\luno$. In order to do so, we have to understand how $G$-invariant and closed subsets of $X$ give rise to closed (and self-adjoint) ideals of $\luno$.

Let $D$ be a closed subset of $X$ invariant under the action of $G$. We can define the following:
$$I_D = \left\{ f=\sum_{s\in G} f_s\delta_s \in \luno : f_s|_D = 0 \text{ for every } s\in G\right\}.$$
It's straightforward to check that $I_D$ is a closed and self-adjoint ideal of $\luno$. The following propositions tell that we can go the other way around, that is, to obtain a closed $G$-invariant subset of $X$ from a closed ideal of $\luno$.

\begin{proposition}\label{propIntId}
    Let $I$ be a closed ideal of $\luno$. Then, there exists a closed and $G$-invariant subset $D$ of $X$ such that
    $$I\cap C(X) = \{f\in C(X) : f|_D = 0\}.$$
\end{proposition}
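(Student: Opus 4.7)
The plan is to invoke the Gelfand correspondence for the commutative $C^*$-algebra $C(X)$ to pin down $D$, and then check $G$-invariance by transporting the ideal invariance through the canonical conjugation action of $G$ on $C(X)$ sitting inside $\luno$.

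First I would verify that $I\cap C(X)$ is a closed ideal of $C(X)$. Closedness is immediate because the inclusion $C(X)\hookrightarrow\luno$ is isometric; that $I\cap C(X)$ is an ideal of $C(X)$ follows because for $f\in I\cap C(X)$ and $g\in C(X)$ the product $gf$ lies in $I$ (as $I$ is a two-sided ideal of $\luno$) and in $C(X)$ (as $C(X)$ is a subalgebra). Gelfand--Naimark duality for the compact Hausdorff space $X$ then supplies a unique closed subset
$$D=\{x\in X:f(x)=0\text{ for every }f\in I\cap C(X)\}$$
such that $I\cap C(X)=\{f\in C(X):f|_D=0\}$, which is exactly the shape required by the statement.

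The substantive step is showing that $D$ is $G$-invariant. For this I would establish the conjugation identity $\delta_s\,f\,\delta_{s^{-1}}=\alpha_s(f)$ in $\luno$, valid for every $s\in G$ and every $f\in C(X)$; it follows directly from the product rule $(f_s\delta_s)(g_t\delta_t)=f_s\alpha_s(g_t)\delta_{st}$ by computing $\delta_s f=\alpha_s(f)\delta_s$ and then multiplying by $\delta_{s^{-1}}$ on the right. Since $I$ is a two-sided ideal, this identity forces $\alpha_s(f)\in I\cap C(X)$ whenever $f\in I\cap C(X)$, so $I\cap C(X)$ is $\alpha$-invariant. To conclude $G$-invariance of $D$, take $x\in D$, $s\in G$ and $f\in I\cap C(X)$; then $\alpha_{s^{-1}}(f)\in I\cap C(X)$ and hence
$$0=\alpha_{s^{-1}}(f)(x)=f(\sigma_s(x)),$$
which says $\sigma_s(x)\in D$.

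No step is genuinely hard; the only things to be careful with are the direction conventions in $\alpha_s(f)(x)=f(\sigma_s^{-1}(x))$, which force one to conjugate by $s^{-1}$ rather than $s$ in the last calculation, and the simple bookkeeping to see that the Gelfand-duality yields the form displayed in the statement. Beyond that, the argument is essentially a one-line translation of the fact that closed $G$-invariant ideals of $C(X)$ correspond to closed $G$-invariant subsets of $X$.
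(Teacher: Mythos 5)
Your proof is correct and follows essentially the same route as the paper: identify $D$ via the standard description of closed ideals of $C(X)$ and obtain $G$-invariance by conjugating elements of $I\cap C(X)$ by $\delta_s$ inside the two-sided ideal $I$. The only cosmetic difference is that you argue invariance of $D$ directly from the $\alpha$-invariance of $I\cap C(X)$, whereas the paper argues by contradiction using a Urysohn function; the underlying mechanism is identical.
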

\begin{proof}
    As $I$ is a closed ideal of $\luno$ it is clear that $I\cap C(X)$ is a closed ideal of $C(X)$. It is well known that every closed ideal of $\luno$ consists of the continuous functions that vanish on some closed subset of $X$. This means that there is some closed subset $D$ of $X$ such that 
    $$I\cap C(X) = \left\{ f\in C(X) : f|_D =0\right\}.$$
    We just have to check that $D$ is invariant under the action of $G$. Suppose that there is some $s\in G$ such that $\sigma_s(D)$ is not included in $D$. This means that there is some $x\in D$ such that $\sigma_s (x) \not\in D$. By Urysohn's lemma we can take $g\in C(X)$ such that $g$ vanishes on $D$ and $g(\sigma_s(x)) \neq 0$. As $I$ is an ideal we have that $(\delta_s)^{-1}g \delta_s = g\circ \sigma_s\in I$, so in particular it belongs to $I\cap C(X)$. It follows that $g(x)=0$, which is a contradiction.
\end{proof}

The following theorem is a generalization of \cite[Theorem 4.2]{1}, where this was proven for $\Z$-actions. It characterizes simplicity of the algebra as minimality of the action. Equivalence between (1) and (3) can be seen as a consequence of the more general \cite[Theorem 6.13]{twistedCrossProd}.

\begin{theorem}\label{thmMinSimp}
    Suppose that the action $\action$ is topologically free. Then, the following are equivalent:
    \begin{enumerate}[(1)]
        \item There are no proper closed ideals in $\luno$.
        \item There are no proper closed and self-adjoint ideals in $\luno$.
        \item The action $\action$ is minimal.
    \end{enumerate}
\end{theorem}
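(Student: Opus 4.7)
The plan is to prove the cycle (1) $\Rightarrow$ (2) $\Rightarrow$ (3) $\Rightarrow$ (1). The implication (1) $\Rightarrow$ (2) is tautological since self-adjoint ideals are in particular ideals, so the work consists of the remaining two implications. The strategy is to use the construction $D \mapsto I_D$ introduced before Proposition \ref{propIntId} for one direction, and Proposition \ref{propIntId} together with Theorem \ref{topFreeThm} for the other.

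For (2) $\Rightarrow$ (3), I argue by contraposition. Suppose $\action$ is not minimal, so that there exists a nonempty proper closed $G$-invariant subset $D \subsetneq X$. The associated ideal $I_D$ is closed and self-adjoint by the discussion preceding Proposition \ref{propIntId}. It is nonzero because Urysohn's lemma (applied to $D$ and a point of $X \setminus D$) produces a nonzero $f \in C(X)$ with $f|_D = 0$, and such $f$ lies in $I_D$. It is proper because the constant function $1$ does not vanish on $D$ (as $D$ is nonempty), so $1 \notin I_D$ and hence $I_D \neq \luno$. This contradicts (2).

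For (3) $\Rightarrow$ (1), let $I$ be a nonzero closed ideal of $\luno$. Since $\action$ is topologically free, Theorem \ref{topFreeThm} yields $I \cap C(X) \neq \{0\}$. Proposition \ref{propIntId} provides a closed $G$-invariant subset $D$ of $X$ such that
\[
I \cap C(X) = \{f \in C(X) : f|_D = 0\}.
\]
Since $I \cap C(X)$ is nonzero, $D$ must be a proper subset of $X$. By minimality, the only proper closed $G$-invariant subset of $X$ is the empty set, so $D = \emptyset$. Consequently $I \cap C(X) = C(X)$, which contains the unit $1$, and hence $I = \luno$.

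The proof is essentially a bookkeeping exercise once the heavy lifting of Theorem \ref{topFreeThm} is available; the only mild subtlety is making sure that $I_D$ in step two is verified to be both nonzero and proper, which is handled by Urysohn on one side and nonemptyness of $D$ on the other. No additional dynamical or analytic input beyond what has been developed in the section is required.
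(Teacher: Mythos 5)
Your proof is correct and follows essentially the same route as the paper: (2) $\Rightarrow$ (3) by contraposition via the ideal $I_D$ attached to a proper closed invariant set (the paper takes $D$ to be an orbit closure, but this is the same idea), and (3) $\Rightarrow$ (1) via Theorem \ref{topFreeThm} and Proposition \ref{propIntId}. Your explicit verification that $I_D$ is both nonzero (Urysohn) and proper ($1 \notin I_D$) is a small but welcome addition that the paper leaves implicit.
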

\begin{proof}
    That (1) implies (2) is evident. Let's prove that (2) implies (3). Suppose that the action $\action$ is not minimal, then there is some $x\in X$ such that its orbit is not dense in $X.$ Let $D$ be the closure of the orbit of $x$. Then $D$ is a closed and $G$-invariant proper subset of $X$, so
    $$I_D = \left\{ f=\sum_{s\in G} f_s\delta_s \in \luno : f_s|_D = 0 \text{ for every } s\in G\right\}$$
    is a proper closed and self-adjoint ideal of $\luno.$ This contradicts (2).

    We now prove that (3) implies (1). Let $I$ be a closed nonzero ideal of $\luno$. As the action $\action$ is topologically free, Theorem \refeq{topFreeThm} tells us that $I\cap C(X)$ is nonzero. So, using Proposition \refeq{propIntId} we get a closed $G$-invariant subset $D$ that is not the whole space. But as $\action$ is minimal, the only $G$-invariant and closed subsets of $X$ are itself and the empty set. Then, it must be that $D$ is the empty set, so $I\cap C(X) = C(X)$. This implies that $I = \luno$. 
\end{proof}

\begin{definition}\label{defTransitivity}
    We say that the action $\action$ is topologically transitive if for every pair of nonempty open subsets $U$ and $V$ of $X$ there is some $s\in G$ such that $\sigma_s(U)\cap V$ is nonempty.
\end{definition}

We characterize when the action is topologically transitive in terms of the properties of the algebra. We need the following lemma.

\begin{lemma}\label{lemmaTransitive}
    The following are equivalent:
    \begin{enumerate}
        \item There exist two nonempty disjoint and $G$-invariant open subsets $O_1$ and $O_2$ of $X$ such that $X = \overline{O_1} \cup \overline{O_2}$.
        \item The action $\action$ is not topologically transitive.
    \end{enumerate}
\end{lemma}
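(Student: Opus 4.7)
The plan is to prove each implication by an explicit construction, the reverse direction being the substantive one.

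For $(1)\Rightarrow(2)$, I would simply take $U=O_1$ and $V=O_2$ as witnesses. Since $O_1$ is $G$-invariant we have $\sigma_s(U)=O_1$ for every $s\in G$, and $O_1\cap O_2=\emptyset$ by hypothesis, so no $s$ moves $U$ to meet $V$.

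For $(2)\Rightarrow(1)$, suppose $U,V$ are nonempty open sets with $\sigma_s(U)\cap V=\emptyset$ for every $s\in G$. I would define
$$O_1=\bigcup_{s\in G}\sigma_s(U),\qquad O_2=X\setminus\overline{O_1}.$$
Then $O_1$ is open and $G$-invariant by construction, and $O_2$ is open by definition. The $G$-invariance of $O_2$ follows because the closure of a $G$-invariant set is $G$-invariant (each $\sigma_s$ is a homeomorphism), so $\overline{O_1}$ is $G$-invariant and hence so is its complement. Disjointness is immediate from the definition of $O_2$, and the covering property $X=\overline{O_1}\cup\overline{O_2}$ follows since already $\overline{O_1}\cup O_2=X$.

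The only place one has to pause is to see that $O_2$ is nonempty, which is where the hypothesis on $V$ gets used. Since $V\cap\sigma_s(U)=\emptyset$ for every $s\in G$, we have $V\cap O_1=\emptyset$, i.e.\ $V\subseteq X\setminus O_1$. Because $V$ is open, it is contained in the interior of $X\setminus O_1$, which is exactly $X\setminus\overline{O_1}=O_2$; hence $V\subseteq O_2$ and $O_2\neq\emptyset$. Nonemptiness of $O_1$ is trivial since $U\subseteq O_1$. This is the main (and only mildly subtle) step of the argument.
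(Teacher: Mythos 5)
Your proof is correct and follows essentially the same route as the paper: the same sets $O_1=\bigcup_{s\in G}\sigma_s(U)$ and $O_2=X\setminus\overline{O_1}$ are used, with the nonemptiness of $O_2$ coming from $V\subseteq O_2$ exactly as in the paper. Your write-up just spells out a few details (invariance of $O_2$, the interior argument) that the paper leaves implicit.
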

\begin{proof}
    That (1) implies (2) is trivial. Let's prove that (2) implies (1). As $\action$ is not topologically transitive, there are two nonempty and open subsets $U$ and $V$ of $X$ such that $\sigma_s(U) \cap V$ is empty for every $s\in G$. Set
    $O_1 = \bigcup_{s\in G}\sigma_s(U).$ It's clear that $O_1$ is an open $G$-invariant subset of $X$. As $V$ is contained in $\overline{O_1}^c$, we can take $O_2 = \overline{O_1}^c$. These sets verify what we need.
\end{proof}

The following theorem generalizes \cite[Theorem 4.10]{1}, which was proved for $\Z$-actions. It characterizes some notion of primality of the algebra as the action being topologically transitive. In the proof, given a closed subset $D$ of $X$, we denote by $\ker(D)$ the set $\left\{ g\in C(X) : g|_D = 0 \right\}.$ We recall that $E$ denotes the the canonical projection from $\luno$ to $C(X)$ given in Definition \refeq{projE}.

\begin{theorem}\label{thmTransitivity}
    Suppose that the action $\action$ is topologically free. Then, the following are equivalent:
    \begin{enumerate}
        \item If $I$ and $J$ are closed nonzero ideals of $\luno$ then $I\cap J$ is nonzero.
        \item If $I$ and $J$ are closed and self-adjoint nonzero ideals of $\luno$ then $I\cap J$ is nonzero.
        \item The action $\action$ is topologically transitive.
    \end{enumerate}
\end{theorem}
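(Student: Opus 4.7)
The implication (1) $\Rightarrow$ (2) is immediate since every closed self-adjoint ideal is in particular a closed ideal. For the remaining implications I plan to use Lemma \refeq{lemmaTransitive} together with the machinery developed in Theorem \refeq{topFreeThm} and Proposition \refeq{propIntId}, which are tailor-made for this kind of argument.

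For (2) $\Rightarrow$ (3) I proceed by contrapositive. If $\action$ is not topologically transitive, Lemma \refeq{lemmaTransitive} provides two nonempty, disjoint, $G$-invariant open subsets $O_1,O_2\subset X$ with $X=\overline{O_1}\cup\overline{O_2}$. Set $D_i = \overline{O_i}$, which are proper closed $G$-invariant subsets. The associated ideals
\[ I_{D_i} = \left\{ f=\sum_{s\in G}f_s\delta_s \in\luno : f_s|_{D_i}=0 \text{ for every }s\in G\right\} \]
are closed, self-adjoint and nonzero (by Urysohn's lemma there is a nonzero continuous function vanishing on $D_i$). Since $D_1\cup D_2=X$, any element of $I_{D_1}\cap I_{D_2}$ has all its components vanishing on all of $X$, so $I_{D_1}\cap I_{D_2}=\{0\}$, contradicting (2).

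For (3) $\Rightarrow$ (1), let $I$ and $J$ be closed nonzero ideals of $\luno$. Topological freeness and Theorem \refeq{topFreeThm} give that $I\cap C(X)$ and $J\cap C(X)$ are both nonzero. By Proposition \refeq{propIntId} there exist closed $G$-invariant subsets $D_1,D_2$ of $X$ with
\[ I\cap C(X) = \ker(D_1), \quad J\cap C(X)=\ker(D_2), \]
and since these ideals are nonzero, both $D_1$ and $D_2$ are proper. Hence $O_i = X\setminus D_i$ are nonempty open $G$-invariant sets. Topological transitivity yields some $s\in G$ with $\sigma_s(O_1)\cap O_2\neq\emptyset$, but $G$-invariance forces $\sigma_s(O_1)=O_1$, so $O_1\cap O_2\neq\emptyset$. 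Picking $x\in O_1\cap O_2$ and invoking Urysohn's lemma produces a nonzero $h\in C(X)$ supported in $O_1\cap O_2$, which therefore vanishes on $D_1\cup D_2$ and lies in $\ker(D_1)\cap\ker(D_2)\subset I\cap J$. Thus $I\cap J\neq\{0\}$.

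The only mildly delicate point is the $G$-invariance provided by Proposition \refeq{propIntId}, since without it we could not replace $\sigma_s(O_1)\cap O_2$ with $O_1\cap O_2$ and conclude directly. Otherwise the argument is a clean translation of the dynamical statement in Lemma \refeq{lemmaTransitive} into the ideal-theoretic language, with Theorem \refeq{topFreeThm} providing the bridge between closed ideals of $\luno$ and $G$-invariant closed subsets of $X$.
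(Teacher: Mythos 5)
Your proof is correct and follows essentially the same approach as the paper's: (2)$\Rightarrow$(3) via Lemma \ref{lemmaTransitive} and the componentwise-vanishing ideals attached to $\overline{O_1}$ and $\overline{O_2}$, and (3)$\Rightarrow$(1) via Theorem \ref{topFreeThm} and Proposition \ref{propIntId}. The only difference is cosmetic: you prove (3)$\Rightarrow$(1) directly by exhibiting a nonzero $h\in\ker(D_1)\cap\ker(D_2)\subset I\cap J$, whereas the paper argues by contradiction, showing that $I\cap J=\{0\}$ would force $D_1\cup D_2=X$ and hence a violation of topological transitivity.
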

\begin{proof}
    It is evident that (1) implies (2). Let's see that (2) implies (3). Suppose that the action $\action$ is not topologically transitive, then by Lemma \refeq{lemmaTransitive} we have two nonempty open sets $O_1$ and $O_2$, disjoint and $G$-invariant, such that $X = \overline{O_1} \cup \overline{O_2}$. We define 
    $$I_i = \left\{ \sum_{s\in G}f_s\delta_s: f_s \in \ker(\overline{O_i}) \text{ for every } s\in G \right\}$$
    for $i=1,2$. Both are closed and self-adjoint ideals of $\luno$. Then
    $$E(I_1 \cap I_2) \subset E(I_1) \cap E(I_2)  = \ker(\overline{O_1}) \cap \ker(\overline{O_2}) = \ker(\overline{O_1} \cup \overline{O_2}) = \ker(X) = \{0\},$$
    so $I_1 \cap I_2 = \{0\}$. This contradicts (2).

    Let's now prove that (3) implies (1). Let $I_1$ and $I_2$ be two proper and closed ideals of $\luno$ such that $I_1\cap I_2 = \{0\}$. As $\action$ is topologically free, Theorem \refeq{topFreeThm} tells us that $I_1 \cap C(X)$ and $I_2\cap C(X)$ are both proper ideals of $C(X)$, so by Proposistion \refeq{propIntId} we get two proper closed and $G$-invariant subsets $D_1$ and $D_2$ of $X$ such that $I_1\cap C(X) = \ker(D_1)$ and $I_2 \cap C(X) = \ker(D_2)$. As $I_1 \cap I_2 = \{0\}$ we also have that $I_1\cap I_2 \cap C(X) =\{0\}$. Then
    $$\{0\} = I_1 \cap I_2 \cap C(X) = \ker(D_1) \cap \ker(D_1) = \ker(D_1 \cup D_2),$$
    so it must be $D_1 \cup D_2 = X$. As both are proper and closed, they must both have nonempty interior, so if we denote by $U$ the interior of $D_1$ and by $V$ the complement of $D_1$ we have that $\sigma_s(U) \cap V = \emptyset$ for every $s\in G$. This contradicts the topological transitivity of $\action$.
\end{proof}

In Proposition \refeq{propIntId} we mentioned a way of relating closed ideals of $\luno$ to closed ideals of $C(X)$: if $I$ is a closed ideal of $\luno$, then $I\cap C(X)$ is a closed ideal of $C(X)$. A reasonable question to ask is whether $C(X)$ detects all closed ideals of $\luno$ through this procedure, i.e., if $I$ is a closed nonzero ideal of $\luno$, then $I\cap C(X)$ is a nonzero ideal of $C(X)$. This property is a case of the general property called the \textit{ideal intersection property}. This property has been studied in different contexts, see, for example, \cite{isp1,isp2}, where the property was studied between different (reduced) $C^*$-algebras and subalgebras of them, and \cite[Theorem 5.6]{fellBundlesISP}, where it was studied for the algebra of crossed sections associated to a Fell bundle over a group with polynomial growth and its universal $C^*$-algebra completion. In our context, Theorem \refeq{topFreeThm} tells us exactly when this holds between $\luno$ and $C(X)$, that is, when the action is topologically free.

Another similar property is the \textit{ideal separation property}, which was also studied in \cite{isp2}. This property holds if, for any pair of closed ideals of $\luno$, they are equal if they have the same intersection with $C(X)$. Explicitly, $I\cap C(X) = J \cap C(X)$ implies $I=J$ for every pair of closed ideals $I$ and $J$ of $\luno$. So, this property holds if we can distinguish closed ideals of $\luno$ through their intersection with $C(X)$, and Theorem \refeq{topFreeThm} tells us that for this to hold we need at least the action to be topologically free.  It turns out that the ideal separation property is related to the definition that follows, which is a strengthening of topological freeness.

\begin{definition}\label{resTopFree}
    We say that the action $\action$ is residually topologically free if for any $G$-invariant closed subset $D$ of $X$, the restriction of the action of $G$ to $D$ is topologically free. For brevity, we sometimes write r.t.f. in place of residually topologically free.
\end{definition}

\begin{remark}\label{rtfAbelian}
    For abelian groups, the notions of free actions and of r.t.f. actions are equivalent. It is evident that if the action is free then it is also r.t.f. For the other direction, suppose that there is some $x\in X$ such that $G_x \neq \{e\}$. Take $D$ as the closure of the orbit of $x$. As $G$ is abelian, all points of the orbit of $x$ have the same stabilizer subgroup $G_x$. Then, given $t\in G_x$, for every $s\in G$ we have that $\sigma_t(\sigma_s(x)) = \sigma_s(\sigma_t(x)) = \sigma_s(x).$ This means that $t$ fixes the whole orbit of $x$, so it also fixes $D$, so the restriction of the action to $D$ is not topologically free.
\end{remark}

\begin{example}
    Let $G$ be the free group on two generators $\mathbb{F}_2$, acting canonically on its Gromov boundary $X=\partial \mathbb{F}_2$. This action is topologically free but not free. The action is also minimal, so it has no proper $G$-invariants subsets, so this action is residually topologically free but not free.
\end{example}

The main result of the rest of this section is Theorem \refeq{rtfThm}. We note that this theorem can be seen as a consequence of the more general \cite[Theorem 6.12]{twistedCrossProd}. We present here a self-contained proof for this context, where the ideas draw inspiration from \cite[Section 1]{resTopFree}, where something similar is done for $C^*$-dynamical systems.

Let us fix some notation. Suppose that $D$ is a closed $G$-invariant subset of $X$. Then, we can restrict to $D$ the action of $G$ and obtain a new action, that we denote by $G\curvearrowright D$. As $G\curvearrowright D$ is an action, we can consider the algebra $\ell^1(G\curvearrowright D),$ and the map $R_D: \luno \to \ell^1(G \curvearrowright D)$ given by
$$R_D\left( \sum_{s\in G}f_s \delta_s\right) = \sum_{s\in G} f_s|_{D} \delta_s \text{ for every } \sum_{s\in G}f_s \delta_s \in \luno;$$
here, $\delta_s$ also denotes the canonical unitary element of $\ell^1(G\curvearrowright D)$ associated to each $s\in G$. It is clear that $R_D$ is a surjective and norm-one *-homomorphism, and that $\ker(R_D) = \{\sum_{s\in G} f_s \delta_s : f_s|_D = 0 \text{ for every } s\in G\}$.

\begin{lemma}\label{lemmaRTF}
    Let $I$ be a closed ideal of $\luno$, and let $D$ be a closed $G$-invariant subset of $X$ such that $I \cap C(X) = \{f\in C(X) : f|_D=0\}$ (as in Proposition \refeq{propIntId}). Then $R_D(I)$ has zero intersection with $C(D)$.
\end{lemma}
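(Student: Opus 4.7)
The plan is to show directly that every $g \in R_D(I) \cap C(D)$ must vanish, by lifting $g$ to an element of $I$ and then showing that the lift's degree-$e$ coefficient is itself in $I$. I would pick $f = \sum_{s\in G} f_s \delta_s \in I$ with $R_D(f) = g$. Identifying $g$ with its natural image in $\ell^1(G \curvearrowright D)$ supported at $e$, the equality $R_D(f) = g$ unpacks coefficient-by-coefficient as $f_e|_D = g$ and $f_s|_D = 0$ for every $s \in G \setminus \{e\}$.

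The key step is to use the standing hypothesis $I \cap C(X) = \{h \in C(X) : h|_D = 0\}$ to convert the vanishing of the off-diagonal coefficients on $D$ into honest membership in $I$. For every $s \neq e$, since $f_s \in C(X)$ vanishes on $D$, one has $f_s \in I \cap C(X) \subset I$, and then the ideal property forces $f_s \delta_s = f_s \cdot \delta_s \in I$.

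Next I would peel off the identity term by combining absolute summability with closedness of $I$. Since $\sum_{s \in G} \|f_s\|_\infty < \infty$, the tail series $\sum_{s \neq e} f_s \delta_s$ converges in the $\ell^1$-norm to $f - f_e$, and every finite partial sum lies in $I$ by the previous step. Closedness of $I$ then yields $f - f_e \in I$, whence $f_e \in I \cap C(X)$; therefore $f_e|_D = 0$, and finally $g = f_e|_D = 0$.

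I do not anticipate a serious obstacle; the argument is essentially bookkeeping once one unpacks $R_D$ coefficient-by-coefficient. The only points requiring care are to distinguish the function $f_s \in C(X)$ from the element $f_s \delta_s \in \luno$ (so that the ideal property can be applied coordinate-wise), and to invoke closedness of $I$ when extracting the identity term from an infinite $\ell^1$-sum.
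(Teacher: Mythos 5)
Your proof is correct and follows essentially the same route as the paper: both identify $f_e|_D = g$ and $f_s|_D = 0$ for $s \neq e$, then use the hypothesis on $I \cap C(X)$ together with the ideal property and closedness of $I$ to conclude that $\sum_{s\neq e} f_s\delta_s \in I$, hence $f_e \in I \cap C(X)$ and $g = 0$. The only difference is that the paper packages this middle step as the observation $\ker(R_D) \subset I$, while you unpack it term by term; the content is the same.
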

\begin{proof}
    Let $g\in R_D(I) \cap C(D)$, which means that there is some $\sum_{s\in G} f_s \delta_s\in I$ that gets mapped to $g$ through $R_D.$ We have then that
    $$\sum_{s\in G}f_s|_{D}\delta_s = g,$$
    so it follows that $f_s|_D = 0$ for every $s\in G\setminus\{e\}$ and $f_e|_D = g$. This implies that $\sum_{s\neq e} f_s \delta_s \in \ker(R_D)$. As $\{f \in C(X) : f|_D = 0\}$ is equal to $I\cap C(X)$, which in turn is contained in $I$, it's clear that $\ker (R_D) \subset I$, so it follows that
    $$f_e = \sum_{s\in G} f_s\delta_s - \sum_{s\neq e}f_s\delta_s \in I.$$
    Then $f_e \in I \cap C(X)$, so its restriction to $D$ is equal to the zero function. As $g$ is equal to $f_e|_D$, we conclude that $ R_D(I) \cap C(D) = \{0\}$.
\end{proof}

\begin{remark}\label{rmkResTopFree}
    Given a bounded and surjective linear operator $T:A\to B$ between Banach spaces, and given $I$ a nonempty subset of $A$, we have that $T(I)$ is closed in $B$ if and only if $I + \ker(T)$ is closed in $A$. We use this in the following theorem.
\end{remark}

In the next proof, given $D$ a closed $G$-invariant subset of $X$, we denote by $r_D:C(X) \to C(D)$ the map given by $r_D(f) = f|_D$ for every $f\in C(X)$. We also denote by $E_D : \ell^1(G \curvearrowright D) \to C(D)$ the map given by $E_D(\sum_{s\in G} f_s\delta_s) = f_e$ for every $\sum_{s\in G} f_s\delta_s\in\ell^1(G\curvearrowright D)$. It is clear that $E_D\circ R_D =r_D \circ E$.

\begin{theorem}\label{rtfThm}
    The following are equivalent:
    \begin{enumerate}
        \item For every pair of closed ideals $I$ and $J$ of $\luno$, if $I \cap C(X)$ is equal to $J \cap C(X)$ then $I=J$.
        \item $E(I)$ is contained in $I$ for every closed ideal $I$ of $\luno$.
        \item The action $\action$ is residually topologically free.
    \end{enumerate}
\end{theorem}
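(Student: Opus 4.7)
The plan is to establish a tight correspondence between closed ideals of $\luno$ and pairs $(D,K)$, where $D$ is a closed $G$-invariant subset of $X$ and $K$ is a closed ideal of $\ell^1(G \curvearrowright D)$ with $K\cap C(D)=\{0\}$. Under r.t.f., Theorem \ref{topFreeThm} applied to the restricted action forces $K=0$, pinning down every closed ideal of $\luno$ as $\ker R_D$; this immediately gives (1) and (2). Conversely, if the action is not r.t.f., Theorem \ref{topFreeThm} supplies a genuinely nontrivial $K$ on the offending restriction, which can be pulled back to produce distinct ideals with the same intersection with $C(X)$, and also an ideal for which $E$ fails to preserve.

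\textbf{(3) implies (1) and (2).} Let $I$ be a closed ideal of $\luno$, and let $D \subset X$ be the closed $G$-invariant subset provided by Proposition \ref{propIntId}. The first step is to show $\ker R_D \subset I$: any $g = \sum_s g_s \delta_s \in \ker R_D$ has each $g_s \in I \cap C(X)$, so $g_s \delta_s \in I$, and the $\ell^1$-convergent sum lies in the closed ideal $I$. Consequently $I + \ker R_D = I$ is closed, so Remark \ref{rmkResTopFree} yields that $R_D(I)$ is closed in $\ell^1(G \curvearrowright D)$. Lemma \ref{lemmaRTF} gives $R_D(I) \cap C(D) = \{0\}$, and r.t.f.\ makes $G \curvearrowright D$ topologically free, so Theorem \ref{topFreeThm} forces $R_D(I) = \{0\}$; hence $I = \ker R_D$. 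Item (1) is then immediate, since two closed ideals with the same intersection with $C(X)$ determine the same $D$ by Proposition \ref{propIntId}. For (2), every $f = \sum_s f_s \delta_s \in I = \ker R_D$ has $f_e|_D = 0$, so $E(f) = f_e \in I \cap C(X) \subset I$.

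\textbf{Negations.} Assume some closed $G$-invariant $D \subset X$ makes $G \curvearrowright D$ fail topological freeness. By Theorem \ref{topFreeThm} (applied to the restricted action), there is a nonzero closed ideal $K$ of $\ell^1(G \curvearrowright D)$ with $K \cap C(D) = \{0\}$. Set $I := R_D^{-1}(K)$ and $J := \ker R_D$, both closed ideals of $\luno$. For any $h \in C(X)$, $h \in I$ iff $h|_D \in K \cap C(D) = \{0\}$, iff $h|_D = 0$; the same equivalence holds for $J$, so $I \cap C(X) = J \cap C(X)$. Yet $R_D(I) = K \neq 0 = R_D(J)$ forces $I \neq J$, contradicting (1). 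For (2), pick any $k = \sum_s k_s \delta_s \in K \setminus \{0\}$; since $K \cap C(D) = \{0\}$ rules out $k \in C(D)$, some $s_0 \neq e$ satisfies $k_{s_0} \neq 0$. The element $k \delta_{s_0^{-1}} \in K$ has $e$-component $k_{s_0}$, and any lift $f \in \luno$ with $R_D(f) = k \delta_{s_0^{-1}}$ lies in $I$ and satisfies $E(f)|_D = k_{s_0} \neq 0$, so $E(f) \notin I \cap C(X)$, hence $E(f) \notin I$, contradicting (2).

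The principal technical subtlety is ensuring $R_D(I)$ is closed, which is what unlocks the application of Theorem \ref{topFreeThm} to the restricted action. This hinges on the containment $\ker R_D \subset I$, and that in turn relies on the precise description of $I \cap C(X)$ from Proposition \ref{propIntId}. Once that passage is secured, the rest is routine bookkeeping using the identity $E_D \circ R_D = r_D \circ E$ and the surjectivity of $R_D$.
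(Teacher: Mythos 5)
Your proof is correct, and its engine is the same as the paper's: the identification $I=\ker R_D$ for every closed ideal $I$ under residual topological freeness, obtained from Proposition \ref{propIntId}, the containment $\ker R_D\subset I$, Remark \ref{rmkResTopFree}, Lemma \ref{lemmaRTF} and Theorem \ref{topFreeThm}. That sandwich argument is precisely the paper's proof of (3)$\Rightarrow$(1); you additionally read off (3)$\Rightarrow$(2) from it, whereas the paper obtains (2) via the cycle (1)$\Rightarrow$(2)$\Rightarrow$(3)$\Rightarrow$(1). Where you genuinely diverge is the implication (2)$\Rightarrow$(3): the paper proves it directly, pulling back a closed ideal $J$ of $\ell^1(G\curvearrowright D)$ with $J\cap C(D)=\{0\}$ to $I=R_D^{-1}(J)$ and computing $E_D(J)=r_D(E(I))\subset J\cap C(D)=\{0\}$ from the intertwining $E_D\circ R_D=r_D\circ E$, while you argue by contrapositive and exhibit an explicit $f\in R_D^{-1}(K)$ with $E(f)\notin R_D^{-1}(K)$ (and, for (1), the distinct pair $R_D^{-1}(K)\supsetneq\ker R_D$ with equal trace on $C(X)$). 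Both routes rest on the same lemmas; the paper's computation is slightly slicker, while your version makes the failure of (1) and (2) concretely visible and, usefully, isolates the fact that a single non--topologically free restriction already produces the counterexamples. The only step worth making explicit is that in your (3)$\Rightarrow$(1) argument two closed ideals with the same intersection with $C(X)$ determine the \emph{same} set $D$: this needs the standard fact that distinct closed subsets of $X$ have distinct vanishing ideals in $C(X)$, which is implicit in (but not literally the statement of) Proposition \ref{propIntId}.
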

\begin{proof}
    Let's first prove that (1) implies (2). Let $I$ be a closed ideal of $\luno$. Using Proposition \refeq{propIntId} we have that there is some closed and $G$-invariant subset $D$ of $X$ such that $I\cap C(X)$ consists of the continuous functions that vanish on $D$. We define the closed ideal
    $$J=\left\{ \sum_{s\in G} f_s \delta_s : f_s|_D = 0 \text{ for every } s \in G\right\}.$$
    It is clear that $J\cap C(X) = I\cap C(X)$, so using (1) we have that $J = I$. Now it follows that $E(I) = I\cap C(X) \subset I$.

    We prove now that (2) implies (3). Let $D$ be a closed and $G$-invariant subset of $X$, and consider the restriction of the action to $D$. In order to prove that $G\curvearrowright D$ is topologically free, we use Theorem \refeq{topFreeThm}. Let $J$ be a closed ideal of $\ell^1(G\curvearrowright D)$ such that $J\cap C(D) = \{0\}$, we have to prove that $J=\{0\}$. We define $I = R_D^{-1}(J)$, which is a closed ideal of $\luno$. By (2) we have that $E(I) \subset I$, so in particular $E(I) \subset I \cap C(X)$. Now, we have that
    $$E_D(J) = E_D(R_D(I)) = r_D(E(I)) \subset r_D(I\cap C(X)) \subset J \cap C(D) = \{0\}.$$
    As $E_D(J) =\{0\}$ it follows that $J=\{0\}$.

    That (3) follows from (1) is the only implication left to prove. Suppose that $I$ and $J$ are two closed ideals of $\luno$ such that $I\cap C(X) = J\cap C(X)$. Using Proposition \refeq{propIntId} we get a closed and $G$-invariant subset $D$ of $X$ such that both $I\cap C(X)$ and $J \cap C(X)$ are equal to $\{f\in C(X): f|_D = 0\}.$ We have that 
    $$\ker(R_D) = \left\{\sum_{s\in G}f_s \delta_s: f_s|_D = 0 \text{ for every }s \in G\right\}$$
    is included in $I$ and $J$. As $R_D$ is contractive and surjective, and $I$ and $J$ are closed, we have by Remark \refeq{rmkResTopFree} that both $R_D(I)$ and $R_D(J)$ are closed ideals of $\ell^1(G\curvearrowright D)$. Now, using Lemma \refeq{lemmaRTF} we get that $R_D(I)\cap C(D)$ and $R_D(I)\cap C(D)$ are both equal to $\{0\}$. By (3), we have that $G\curvearrowright D$ is topologically free, so Theorem \refeq{topFreeThm} implies that $R_D(I) = R_D(J) = 0$. This means that $I$ and $J$ are included in $\ker (R_D)$. As we have proved both inclusions hold, we conclude that $I = \ker (R_D) = J$.
\end{proof}
\begin{remark}
    The theorem above bears a resemblance to some of the work done in \cite[Section 3]{3}. There, for $\Z$-actions, two notions of noncommutative \textit{spectral synthesis} are studied for $\luno$. It is proved there that (1) and (2) of the above theorem are equivalent to the action being free, which makes sense given that $\Z$ is abelian and Remark \refeq{rtfAbelian}. 
\end{remark}

\begin{remark}
    Theorem \refeq{rtfThm} implies that, when the action $\action$ is r.t.f., then we can completely describe the (closed) ideal structure of $\luno$. Concretely, we have that every ideal closed $I$ is such that there exists a $G$-invariant closed subset $D$ of $X$ that satisfies
    $$I=\left\{ \sum_{s\in G}f_s\delta_s : f_s|_D = 0 \text{ for every } s\in G \right\}.$$
    In this situation, we refer to $I$ as the ideal generated by $D$.
    It follows that, when the action is residually topologically free, there is a bijection between the closed ideals of $\luno$ and the $G$-invariant closed subsets of $X$.
\end{remark}

    An ideal $I$ is prime if it is not the whole algebra and $JK \subset I$ implies $J\subset I$ or $K\subset I$, for every pair of ideals $J$ and $K$. An ideal $I$ is called semiprime if $J^2 \subset I$ implies $J\subset I$. Equivalently, an ideal is semiprime if it is an intersection of prime ideals (see \cite[Theorem 10.11]{nonCommRings}). In $C^*$-algebras, every closed ideal is semiprime, given that every closed ideal is the intersection of primitive ideals and primitive ideals are prime. Recently, there has been an interest on the study of non-closed prime and semiprime ideals in $C^*$-algebras, see \cite{primes1,primes2,primes3}. With the above remark at our disposal we are able to show that, if the action $\action$ is r.t.f., then every closed ideal of $\luno$ is semiprime. We can also characterize closed prime ideals of $\luno$. 

    We recall that a minimal set $C$ of the action $G\curvearrowright X$ is a closed and $G$-invariant subset of $X$ such that it does not contain any proper closed and $G$-invariant subset. This is equivalent to the restriction of the action to $C$ being minimal.

\begin{corollary}
    Let $\action$ be residually topologically free. Then, every closed ideal of $\luno$ is semiprime. Furthermore, a closed ideal $I$ of $\luno$ is prime if and only if there exists some closed and minimal $G$-invariant subset $C$ of $X$ such that
    $$I = \left\{ \sum_{s\in G}f_s\delta_s \in \luno : f_s|_C = 0 \text{ for every } s\in G \right\}.$$
\end{corollary}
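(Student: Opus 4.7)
The plan is to translate the corollary into set-theoretic questions about closed $G$-invariant subsets of $X$, using the bijection from the preceding remark ($D \mapsto I_D$). The proof then reduces to verifying two algebraic lemmas about this correspondence and applying them in each case.

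First I would establish, for any closed $G$-invariant subsets $D, D_1, D_2 \subset X$, the identities (i) $I_{D_1} \subset I_{D_2}$ if and only if $D_2 \subset D_1$, and (ii) $I_{D_1} \cdot I_{D_2} \subset I_D$ if and only if $D \subset D_1 \cup D_2$. Identity (i) is immediate from Urysohn. For (ii), the forward inclusion $I_{D_1}\cdot I_{D_2} \subset I_{D_1 \cup D_2}$ follows by expanding the product $(fg)_s = \sum_{t\in G} f_t\,\alpha_t(g_{t^{-1}s})$ and using $G$-invariance of the $D_i$ (so $\alpha_t(g_{t^{-1}s})|_{D_2}=0$ whenever $g_{t^{-1}s}|_{D_2}=0$); the converse is a Urysohn construction producing a nonvanishing product at any point of $D \setminus (D_1 \cup D_2)$.

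With (i) and (ii) in hand, the semiprime claim is immediate: if $J^2 \subset I = I_D$, then $\overline{J}^2 \subset \overline{J^2} \subset I$ since $I$ is closed, and writing $\overline{J} = I_{D'}$ via the bijection, (ii) forces $D \subset D' \cup D' = D'$, so (i) gives $\overline{J} \subset I$ and hence $J \subset I$. The reverse direction of the prime equivalence ($C$ minimal $\Rightarrow I_C$ prime) is equally clean: $JK \subset I_C$ for closed $J = I_{D_1}, K = I_{D_2}$ yields $C \subset D_1 \cup D_2$ by (ii); minimality of $C$ forces $C \cap D_i \in \{\emptyset, C\}$, and since $C$ is nonempty and $C = (C \cap D_1) \cup (C \cap D_2)$ at least one of these intersections equals $C$, so by (i) either $J \subset I_C$ or $K \subset I_C$. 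Ideals that are not a priori closed are handled by passing to their closures, exactly as in the semiprime argument.

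The forward direction ($I_D$ prime $\Rightarrow D$ minimal) is where I expect the main obstacle. The natural attempt, given a proper nonempty closed $G$-invariant subset $M \subsetneq D$, is to set $D_1 = M$ and $D_2 = \overline{D \setminus M}$, producing $D = D_1 \cup D_2$ and contradicting primeness via (ii). The obstruction is that $\overline{D\setminus M}$ may coincide with $D$ precisely when $M$ has empty interior in $D$, leaving only one genuinely proper summand. Overcoming this is the crux of the argument: I would exploit the r.t.f.~hypothesis applied to the restricted action $G \curvearrowright D$ (which supplies density of aperiodic orbits in $D$) to refine the choice of $M$, or to exhibit two disjoint proper closed $G$-invariant subsets of $D$ whose union is $D$, thus completing the reductio.
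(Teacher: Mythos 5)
Everything you actually prove coincides with the paper's argument: the reduction of primeness and semiprimeness to closed ideals by passing to closures, the correspondence $D\mapsto I_D$ supplied by residual topological freeness, your identities (i) and (ii) (the paper establishes exactly these, with the same Urysohn argument for the converse of (ii)), the semiprimeness claim, and the implication that $C$ minimal forces $I_C$ prime. Up to the last direction you and the paper run in parallel.

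The gap is the direction you yourself flag, ``$I_C$ prime $\Rightarrow C$ minimal'', and your suspicion that the obstruction there is serious is correct --- in fact it cannot be overcome. By (i) and (ii), primeness of $I_C$ says exactly that $C$ is not the union of two proper closed $G$-invariant subsets, and by Lemma \ref{lemmaTransitive} (applied to the restricted action $G\curvearrowright C$) this is equivalent to topological transitivity of $G\curvearrowright C$, which is strictly weaker than minimality. Residual topological freeness does not bridge this: a free action is r.t.f., and there exist free, topologically transitive, non-minimal systems --- for instance the $\Z$-subshift obtained as the orbit closure of a point whose backward tail lies in one aperiodic minimal subshift and whose forward tail lies in a second, disjoint one; it is point-transitive (hence topologically transitive), has no periodic points, and properly contains a minimal subshift. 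For such a system the zero ideal of $\luno$ is prime although $X$ is not minimal, so your plan of producing two proper closed invariant pieces covering $D$ must fail in general. For comparison, the paper's own proof disposes of this direction with the single sentence that the covering property ``$C\subset D\cup E$ implies $C\subset D$ or $C\subset E$'' is equivalent to minimality of $C$; only the implication from minimality is justified there (it is your argument), and the converse is exactly what fails. What your (i), (ii) and Lemma \ref{lemmaTransitive} do yield is the corrected statement: a closed ideal $I$ is prime if and only if $I=I_C$ for a closed $G$-invariant set $C$ such that $G\curvearrowright C$ is topologically transitive; minimality of $C$ is sufficient but not necessary.
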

\begin{proof}
    In order to check that a closed ideal $I$ is prime, it is enough to check that the condition for primeness holds with respect to closed ideals. That is, if $J$ and $K$ are closed ideals such that $JK \subset I$, then $J\subset I$ or $K\subset I$. This follows from the fact that the product ideal $\overline{J} \, \overline{K}$ is contained in $\overline{JK}$ and that, if $I$ is closed, then $\overline{I} = I$. The same argument justifies that, in order to check that a closed ideal $I$ is semiprime, it is enough to prove that if $J$ is a closed ideal such that $J^2$ is contained in $I$ then $J$ is included in $I$.
    
    Let $I$ be a closed ideal and, as the action is r.t.f., let $C$ be the closed $G$-invariant subset of $X$ such that $I$ is generated by $C$, i.e.:
    $$I=\left\{ \sum_{s\in G}f_s\delta_s : f_s|_C = 0 \text{ for every } s\in G\right\}.$$
    For every closed $G$-invariant subset $D$ of $X$, we denote by $J_D$ the closed ideal of $\luno$ generated by $D$, which is defined analogously to $I$. It is clear that, for every pair of closed $G$-invariant sets $E$ and $D$, if $C$ is contained in $E\cup D$ then $J_E J_D$ is contained in $I$. The reciprocal also holds, that is, if $J_E J_D$ is contained on $I$, then $C$ is contained in $E\cup D$. This follows from the fact that if $J_E J_D$ is contained in $I$, then for every $f$ and $g$ belonging to $C(X)$ and vanishing on $E$ and $D$, respectively, the product $fg$ belongs to $I$, so it must vanish on $C$. This happens only if $C$ is contained in $D\cup E$. So, we have that $J_D J_E$ is contained in $I$ if and only if $C$ is contained in $D\cup E$.

    Let us observe that $J_D$ is contained in $I$, for some closed $G$-invariant set $D$, if and only if $C$ is contained in $D$. Now, as the action is residually topologically free, every closed ideal of $\luno$ is of the form $J_D$ for some closed $G$-invariant set $D$. This means that we can easily translate the ideal $I$ being semiprime or prime to a condition on the closed $G$-invariants sets of the action. First, $I$ is always semiprime because $J_D^2$ being contained in $I$ is equivalent to $C$ being contained in $D$, which implies that $J_D$ is contained in $I$. Finally, the ideal $I$ is prime if and only if for every pair $D$ and $E$ of closed $G$-invariants subsets of $X$ such that $C$ is contained in $D\cup E$ we have that $C$ is contained in $D$ or $C$ is contained in $E$. This is equivalent to $C$ being a minimal set for the action.    
\end{proof}

\section{Detecting freeness through self-adjoint ideals}
We mentioned in the introduction the result \cite[Theorem 4.4]{1}. This theorem states that, for $\Z$-actions, the action is free if and only if every closed ideal of $\lone{\Z}{X}$ is self-adjoint. This theorem is interesting in the sense that freeness of the action can be detected by $\lone{\Z}{X}$ through the property of every closed ideal being automatically self-adjoint. This is not something that can be detected by its $C^*$-envelope, as in a $C^*$-algebra all closed ideals are self-adjoint. We would like to generalize this result to this setting, but some problems arise when trying to do this. First of all, the following example shows that \cite[Theorem 4.4]{1} is not true in general.

\begin{example}\label{ctrExample}
We have an action $\varphi : \Z_2 \to \text{Aut}(\Z)$ of $\Z_2$ on $\Z$, given by $\varphi(0) = \text{Id}$ and $\varphi (1) = -\text{Id}$, so we can consider the semidirect product $G = \Z \rtimes_\varphi \Z_2$.  Let $\theta$ be an irrational number. Now, let $X$ be the circle $S^1 = \R / \Z$, and let $G$ act on $X$ the following way: $\sigma_{(n,0)}(\beta) = \beta + n\theta$ (modulo $\Z$) and $\sigma_{(0,1)}(\beta) = -\beta$ for every $\beta \in \R  /  \Z$. It is straightforward to check that this defines an action on $X$, given by 
$$\sigma_{(n,m)}(\beta) = (-1)^m\beta +n\theta.$$
If we identify $\R / \Z$ with the unit circle on the complex plane, what we have is that $(1,0)$ acts as an irrational rotation of angle $\theta$ and that $(0,1)$ acts as a symmetry with respect to the real axis. As $\theta$ is irrational we have that the action $\action$ is minimal. It is easy to verify that $\Per = \left\{ \beta = \frac{n\theta + m}{2}\in\R/\Z : n,m\in \Z\right\}$, so the action is topologically free but not free. But as the action is topologically free and minimal, by Theorem \refeq{thmMinSimp} we have that $\luno$ has no proper closed ideals so, trivially, every closed ideal of $\luno$ is self-adjoint.
\end{example}

We remark that the proof of one of the implications of \cite[Theorem 4.4]{1} generalizes well to this setting: if the action $\action$ is free, then every closed ideal of $\luno$ is self-adjoint. It is interesting that, in that proof, what is actually used is that if the action is free then it is also r.t.f, and that implies that every closed ideal is self-adjoint. This is not a big deal if we are working with $\Z$-actions because Remark \refeq{rtfAbelian} tells us that freeness and residual topological freeness are equivalent for abelian groups. But if we are working with a nonabelian group, we get that being r.t.f, which is weaker than the action being free, is enough to have that every closed ideal is self-adjoint. Regardless, we are able to prove this implication easily as a simple corollary of Theorem \refeq{rtfThm}.

\begin{corollary}\label{libImpSelfAdj}
    If the action $\action$ is residually topologically free, then every closed ideal of $\luno$ is self-adjoint.
\end{corollary}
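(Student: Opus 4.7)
The plan is to invoke the equivalence from Theorem \refeq{rtfThm}, which states that, under residual topological freeness, any two closed ideals of $\luno$ having the same intersection with $C(X)$ must coincide. Given a closed ideal $I$ of $\luno$, I first observe that $I^*$ is again a closed ideal, since the involution is an isometric conjugate-linear anti-automorphism of $\luno$. It therefore suffices to verify that $I \cap C(X) = I^* \cap C(X)$, whereupon applying Theorem \refeq{rtfThm}(1) to the pair $I, I^*$ yields $I = I^*$, as desired.

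For the equality of these intersections, I would first establish that $I^* \cap C(X) = (I \cap C(X))^*$. Indeed, if $h \in I^* \cap C(X)$, then $h = g^*$ for some $g \in I$; applying $*$ yields $g = h^*$, and since $h \in C(X)$ and $(f\delta_e)^* = \overline{f}\,\delta_e$ for $f \in C(X)$, we obtain $g = \overline{h} \in C(X)$, so $g \in I \cap C(X)$ and $h \in (I \cap C(X))^*$; the reverse containment is analogous. The key remaining observation is that the involution on $\luno$ restricts to complex conjugation on $C(X)$, so $I \cap C(X)$ is a closed ideal in the commutative $C^*$-algebra $C(X)$, and hence is automatically closed under conjugation. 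This gives $(I \cap C(X))^* = I \cap C(X)$, and combining this with the previous identity produces the equality of intersections.

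There is essentially no technical obstacle beyond the invocation of Theorem \refeq{rtfThm}, which has already done the heavy lifting. The corollary becomes a concise consequence of the principle that self-adjointness of closed ideals is automatic at the level of the commutative subalgebra $C(X)$, and the residual topological freeness hypothesis transports this property to every closed ideal of $\luno$ via the ideal separation property characterized in Theorem \refeq{rtfThm}.
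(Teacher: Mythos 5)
Your proof is correct and follows exactly the paper's argument: note that $I^*$ is a closed ideal, use that closed ideals of the $C^*$-algebra $C(X)$ are self-adjoint to conclude $I\cap C(X)=I^*\cap C(X)$, and then apply the ideal separation property of Theorem \refeq{rtfThm}. You simply spell out in more detail the identification $I^*\cap C(X)=(I\cap C(X))^*$, which the paper leaves implicit.
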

\begin{proof}
    Let $I$ be a closed ideal. As $C(X)$ is a $C^*$-algebra every closed ideal of it is self-adjoint, so it must be that $I \cap C(X) = I^* \cap C(X)$. As $\action$ is r.t.f, by Theorem \refeq{rtfThm} we obtain that $I=I^*$.
\end{proof}
As we said before, freeness of the action is stronger than it being r.t.f, so the above theorem also works if we change one condition by the other. We now try to generalize the reciprocal of the theorem. We cannot do the same that was done in \cite[Theorem 4.4]{1} because there it is used that if the action of $\Z$ is not free then there is a finite orbit, which is not necessarily true for arbitrary discrete and countable groups.

We denote by $\ker(\alpha)$ the set of elements of $G$ that act trivially on $X$. In what follows, given a discrete group $H$, we denote by $\ell^1(H)$ the Banach *-algebra of integrable functions with respect to the counting measure, with the usual convolution and involution, and we denote its norm by $\lVert\>\cdot\>\rVert_1$. Given $t\in H$ we denote by $\mathbbm{1}_{t}$ the characteristic function of $\{t\}$, which belongs to $\ell^1(H)$, and we recall that $(\mathbbm{1}_{t} * \xi)(h) = \xi (t^{-1}h)$ for every $\xi \in \ell^1(H)$ and every $h\in H$, where $*$ denotes the usual convolution in $\ell^1(H)$.

\begin{lemma}\label{propLiftIdeal}
    Let $G$ be abelian. Suppose that $H$ is a subgroup of $G$ contained on $\ker(\alpha)$ and such that $\ell^1(H)$ has a closed and non-self-adjoint ideal. Then, there is a closed and non-self-adjoint ideal in $\luno$.
\end{lemma}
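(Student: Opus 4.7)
The plan is to exploit the hypothesis to embed $\ell^1(H)$ as a central $*$-subalgebra of $\luno$, lift $J$ to a closed two-sided ideal of $\luno$ in the obvious way, and then produce a $*$-linear retraction $\luno\to\ell^1(H)$ in order to transport the non-self-adjointness back to $\luno$.

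First I would observe that the assignment $\mathbbm{1}_h\mapsto 1\cdot\delta_h$ (with $1$ the constant function on $X$) extends to an isometric $*$-homomorphism $\iota:\ell^1(H)\hookrightarrow\luno$: the product and involution on $\luno$ restricted to this image reduce to the convolution and involution on $\ell^1(H)$ because $\alpha_h|_{C(X)}=\mathrm{id}$ for every $h\in H$. Using additionally that $G$ is abelian, one gets $\delta_h\delta_g=\delta_g\delta_h$ for all $g\in G$ and $\delta_h f=f\delta_h$ for all $f\in C(X)$, so $\iota(\ell^1(H))$ is central in $\luno$. With this centrality in hand, I set $I:=\overline{\iota(J)\cdot\luno}$, which is then a closed two-sided ideal of $\luno$ containing $\iota(J)$ (via $\delta_e\in\luno$), hence nonzero.

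To recover $J$ from $I$, fix any point $x\in X$ (every point is $H$-fixed since $H\subset\ker(\alpha)$, so no canonical choice needs to be made) and define
\[\Psi:\luno\to\ell^1(H),\qquad \Psi\Bigl(\sum_{g\in G}f_g\delta_g\Bigr)=\sum_{h\in H}f_h(x)\,\mathbbm{1}_h.\]
Routine checks show that $\Psi$ is a contraction, that $\Psi\circ\iota=\mathrm{id}_{\ell^1(H)}$, and that $\Psi$ is $*$-preserving (the last crucially using $\alpha_h|_{C(X)}=\mathrm{id}$). The key computation is the $\ell^1(H)$-linearity identity $\Psi(\iota(j)f)=j\cdot\Psi(f)$ for $j\in\ell^1(H)$ and $f\in\luno$: expanding $\iota(j)f=\sum_{h,g}j_h f_g\delta_{hg}$ (using $\alpha_h=\mathrm{id}$), re-indexing $k=hg$ and restricting to $k\in H$ (which forces $g\in H$), one reads off $\Psi(\iota(j)f)(k)=\sum_{h\in H}j_h f_{h^{-1}k}(x)=(j\cdot\Psi(f))(k)$.

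Once this identity is in place, the ideal property of $J$ in $\ell^1(H)$ gives $\Psi(\iota(j)f)\in J$ for every $j\in J$ and $f\in\luno$; by continuity $\Psi(I)\subset J$, and since $\Psi(\iota(J))=J$ the reverse inclusion also holds, so $\Psi(I)=J$. If $I$ were self-adjoint, applying $\Psi$ would give $J=\Psi(I)=\Psi(I^*)=\Psi(I)^*=J^*$, contradicting the hypothesis on $J$; hence $I$ is the desired closed non-self-adjoint ideal. The main obstacle is precisely the module identity $\Psi(\iota(j)f)=j\cdot\Psi(f)$; everything else is formal once the centrality of $\iota(\ell^1(H))$ in $\luno$ is at hand.
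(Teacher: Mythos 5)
Your proof is correct, but it follows a genuinely different route from the paper's. The paper works with the whole family of slice maps $\phi_{x,s}\colon\luno\to\ell^1(H)$, $\phi_{x,s}(f)(h)=f(sh)(x)$, indexed over all $x\in X$ and $s\in G$, and lifts $J$ to the \emph{largest} candidate $\widetilde J=\bigcap_{x,s}\phi_{x,s}^{-1}(J)$; the bulk of that argument is then the verification that $\widetilde J$ is a two-sided ideal, done by decomposing the convolution product over coset representatives of $H$ in $G$, and non-self-adjointness is witnessed by an explicit element $\widehat\xi$ with $\widehat\xi\in\widetilde J$ but $(\widehat\xi)^*\notin\widetilde J$. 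You instead take the \emph{smallest} closed ideal containing $\iota(J)$, and your observation that $\iota(\ell^1(H))$ is central in $\luno$ --- which uses exactly the same two hypotheses ($H\subset\ker(\alpha)$ and $G$ abelian) that drive the paper's coset computation --- makes the two-sidedness of $I$ immediate. Your retraction $\Psi$ is essentially the paper's $\phi_{x,e}$ for a single fixed $x$, and your module identity $\Psi(\iota(j)f)=j*\Psi(f)$ is the special case of the paper's coset decomposition corresponding to the coset $H$ itself. What your approach buys is brevity and a cleaner logical structure: one contractive $*$-preserving map instead of a family, and the conclusion $I\neq I^*$ falls out formally from $\Psi(I)=J$ and $J\neq J^*$ rather than from tracking an explicit witness. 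Note that your $I$ and the paper's $\widetilde J$ are in general different ideals (yours is contained in theirs); both serve the purpose. All the individual steps you flag as routine do check out: $\iota$ is an isometric $*$-embedding, $\Psi\circ\iota=\mathrm{id}$, $\Psi(f^*)=\Psi(f)^*$ since $\sigma_h=\mathrm{id}$ for $h\in H$, and $\iota(J)\subset I$ because $\luno$ is unital.
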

\begin{proof}
    Let $J$ be a closed and non-self-adjoint ideal of $\ell^1(H)$. For every $x\in X$, every $s\in G$ and every $f \in\luno$, we define $\phixs{x}{s}{f}$ as the map from $H$ to $\C$ given by $\phixs{x}{s}{f} (h) = f(sh)(x)$ for every $h\in H$. We have that
    \begin{equation*}
        \lVert \phixs{x}{s}{f}\rVert_1  = \sum_{t\in H} |f(st)(x)| \leq \sum_{t\in H} \lVert f(st) \rVert_\infty \leq \sum_{s\in G} \lVert f(s) \rVert_\infty = \lVert f \rVert < \infty,
    \end{equation*}
    so $\phixs{x}{s}{f}$ belongs to $\ell^1(H)$ for every $x\in X$, every $s\in G$ and every $f\in\luno$. Then, we can think of $\phi_{x,s}$ as a contracting linear map from $\luno$ to $\ell^1(H)$. We define
    $\widetilde J = \left\{ f \in \luno : \phixs{x}{s}{f} \in J \text{ for every } x\in X \text{ and } s\in G\right\}.$
    We will prove that $\widetilde J$ is a closed and non-self-adjoint ideal of $\luno$. 

    We can write 
    $$\widetilde J = \bigcap_{\substack{x\in X\\ s\in G}} \phi_{x,s}^{-1}(J)$$
    and, as for every $x\in X$ and every $s\in G$ we have that $\phi_{x,s}$ is contractive and $J$ is closed, we have that $\widetilde J$ is also closed. We have to check that it is an ideal. We write $G/H=\{s_\beta H: \beta \in I_H\}$ where $\{s_\beta : \beta \in I_H\}$ is a set of representatives of the quotient, so we have that $G = \bigsqcup_{\beta\in I_H} s_\beta H$. Let $f\in\luno$ and let $g\in\widetilde J$, and fix $x\in X$ and $s\in G$. Let $h\in H$, we have that
    \begin{align*}
        \begin{split}
            \phixs{x}{s}{fg}(h) &= (fg)(sh)(x) = \left[ \sum_{i\in G} f(i) \alpha_i( g (i^{-1}sh))\right](x)\\
            &= \left[ \sum_{\beta \in I_H} \sum_{t\in H} f(s_\beta t) \alpha_{s_\beta t} (g((s_\beta t)^{-1} sh))\right](x).
        \end{split}
    \end{align*}
    Let's fix $\beta\in I_H$. We have that $H \subset \ker(\alpha)$ and that $G$ is abelian, so
    \begin{align*}
        \begin{split}
                \left[ \sum_{t\in H} f(s_\beta t) \alpha_{s_\beta t} (g((s_\beta t)^{-1} sh))\right](x) &= \sum_{t\in H} f(s_\beta t)(x)  \alpha_{s_\beta}(g(t^{-1}s_\beta^{-1}sh))(x)\\
                &= \sum_{t\in H}f(s_\beta t)(x) g(s_\beta^{-1}st^{-1}h) (\sigma_{s_\beta}^{-1}(x))\\
                & = \sum_{t\in H} \phi_{x,s_\beta}(f)(t) \phi_{\sigma_{s_\beta}^{-1}(x), s_{\beta}^{-1}s}(g)(t^{-1}h)\\
                &= (\phi_{x,s_\beta}(f) * \phi_{\sigma_{s_\beta}^{-1}(x), s_\beta^{-1}s}(g))(h),
        \end{split}
    \end{align*}
    and we can express 
    $\phi_{x,s}(fg)(h) = \sum_{\beta\in I_H} (\phi_{x,s_\beta}(f) * \phi_{\sigma_\beta^{-1}(x), s_\beta^{-1}s}(g))(h)$
    for every $h\in H$. So we can write 
    $$\phi_{x,s}(fg) = \sum_{\beta\in I_H} \phi_{x,s_\beta}(f) * \phi_{\sigma_\beta^{-1}(x), s_\beta^{-1}s}(g),$$
    where the sum converges because
    \begin{align*}
        \begin{split}
            \lVert \sum_{\beta\in I_H} \phi_{x,s_\beta}(f) &* \phi_{\sigma_{s_\beta}^{-1}(x), s_\beta^{-1}s}(g)\rVert_1 \leq \sum_{\beta\in I_H} \lVert \phixs{x}{s_\beta}{f} \rVert_1 \lVert \phixs{\sigma_{s_\beta}^{-1}(x)}{s_\beta^{-1}s}{g} \rVert_1\\
            &\leq \sum_{\beta\in I_H} \left( \sum_{t\in H} |\phixs{x}{s_\beta}{f}(t)|\right) \lVert g\rVert = \lVert g\rVert \sum_{\beta\in I_H} \sum_{t\in H} |f(s_\beta t)(x)|\\
            &\leq \lVert g\rVert\sum_{\beta\in I_H} \sum_{t\in H} \lVert f(s_\beta t)\rVert_\infty =\lVert g\rVert \sum_{s\in G} \lVert f(s) \rVert_\infty = \lVert g\rVert \rVert f\rVert < \infty.
        \end{split}
    \end{align*}
    As $f\in\widetilde J$ we have that $\phixs{x}{s_\beta}{f} \in J$, so it follows that $ \phi_{x,s_\beta}(f) * \phi_{\sigma_\beta^{-1}(x), s_\beta^{-1}s}(g)$ also belongs to $J$ for every $\beta\in I_H$. Consequently, $\phixs{x}{s}{fg}$ is in $J$ as it is the limit of elements in $J$, which is a closed ideal. As this works for every $x\in X$ and every $s\in G$, we conclude that $fg \in \widetilde J$.

    Similarly, we prove that $gf$ belongs to $\widetilde J$. Let $x\in X$ and let $s\in G$, and take $h\in H$. We have that:
    \begin{align*}
        \begin{split}
            \phi_{x,s}(gf)(h)
            &= (gf)(sh)(x)=  \left[ \sum_{\beta \in I_H} \sum_{t\in H} g(s_\beta t) \alpha_{s_\beta t} (f((s_\beta t)^{-1}sh))\right](x)\\
            &=  \sum_{\beta \in I_H}\sum_{t\in H} g(s_\beta t) (x) \alpha_{s_\beta} (f(s_\beta^{-1}st^{-1}h)) (x)\\
            &= \sum_{\beta\in I_H} \sum_{t\in H} g(s_\beta t)(x) f(s_\beta^{-1}st^{-1}h) (\sigma_{s_\beta}^{-1}(x)) .
        \end{split}
    \end{align*}
    Fixing $\beta\in I_H$, we have that
    \begin{align*}
        \begin{split}
            \sum_{t\in H} g(s_\beta t)(x) f(s_\beta^{-1}st^{-1}h) (\sigma_{s_\beta}^{-1}(x))
            &= \sum_{t\in H} \phixs{x}{s_\beta}{g}(t) \phixs{\sigma_{s_\beta}^{-1}(x)}{s_\beta^{-1}s}{f}(t^{-1}h)\\
            &=(\phixs{x}{s_\beta}{g} * \phixs{\sigma_{s_\beta}^{-1}(x)}{s_\beta^{-1}s}{f})(h),
        \end{split}
    \end{align*}
    which belongs to $J$ because $g$ belongs to $\widetilde J$. So we can write
    $$\phi_{x,s}(gf) = \sum_{\beta \in I_H} \phixs{x}{s_\beta}{g} * \phixs{\sigma_{s_\beta}^{-1}(x)}{s_\beta^{-1}s}{f}, $$
    where the sum again converges by the same kind of computation as in the case above. So $\phixs{x}{s}{gf}\in J$ as it is the limit of elements in $J$. As this works for every $x\in X$ and $s\in G$, we have that $gf \in \widetilde J$. We can conclude then that $\widetilde J$ is a closed ideal of $\luno$.

    We only have left to check that $\widetilde J$ is not self-adjoint. As $J$ is not self-adjoint there is some $\xi \in J$ such that $\xi^* \not\in J$. We define $\widehat \xi : G \to C(X)$ the following way: for every $s\in G$, we set $\widehat\xi(s)$ as the constant function $\xi(s)$ if $s\in H$, and we set is as $0$ if not. As we have that $\xi\in\ell^1(H)$ it's direct to check that $\widehat\xi\in\luno$. Let's first check that $\widehat\xi$ is in $\widetilde J$. Let $x\in X$ and let $s\in G$, we have that
    \begin{equation*}
        \phixs{x}{s}{\widehat\xi}(h)
            = \widehat\xi(sh)(x)=
            \begin{cases}
                \xi(sh) &\text{if }sh\in H\\
                0 & \text{if } sh\not\in H
            \end{cases}
            =
            \begin{cases}
                (\mathbbm{1}_{s^{-1}} * \xi)(h) & \text{if } s\in H\\
                0 & \text{if } s\not\in H.\\
            \end{cases}
    \end{equation*}
    This means that if $s\not\in H$ then $\phixs{x}{s}{\widehat\xi} = 0$, so it is in $J$. If $s\in H$, we have then that $\phixs{x}{s}{\widehat\xi} = \mathbbm{1}_{s^{-1}}  *\xi$, which belongs to $J$ because $J$ is an ideal. We conclude then that $\widehat\xi$ is in $\widetilde J$. Let us check that $(\widehat{\xi})^*$ is not in $\widetilde J$. We take any $x\in X$ and consider $\phixs{x}{e}{(\widehat\xi)^*}$. For every $h\in H$ we have
    \begin{align*}
        \begin{split}
            \phixs{x}{e}{(\widehat\xi)^*}(h) 
            &= ((\widehat\xi)^*)(h)(x) = \alpha_{h}\left( \overline{\widehat\xi(h^{-1})} \right)(x) = \alpha_h\left( \overline{\xi(h^{-1})} \right) (x)\\
            &= \overline{\xi(h^{-1})}(\sigma_h^{-1}(x)) = \overline{\xi(h^{-1})} = \xi^*(h),
        \end{split}
    \end{align*}
    so $\phixs{x}{e}{(\widehat\xi)^*} = \xi^*$, which does not belong to $J$. So we obtain that $(\widehat\xi)^*$ is not in $\widetilde J$, and conclude that $\widetilde J$ is not self-adjoint.
\end{proof}

We need the following theorem, which is proved in \cite[Theorem 7.7.1]{Rudin}.
\begin{theorem}
    Let $H$ be a discrete and abelian group. If $H$ is infinite, there is a closed and non-self-adjoint ideal in $\ell^1(H)$.
\end{theorem}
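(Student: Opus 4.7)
The plan is to apply Pontryagin duality to transfer the problem to the compact dual $\widehat{H}$, which is compact (since $H$ is discrete) and infinite (since $H$ is). Consider the Gelfand/Fourier transform
\[
\Phi:\ell^1(H)\to C(\widehat{H}),\qquad \Phi(\xi)(\chi)=\sum_{h\in H}\xi(h)\chi(h).
\]
A direct computation using $\chi(h^{-1})=\overline{\chi(h)}$ shows that $\Phi$ is an injective, contractive *-homomorphism, where $C(\widehat{H})$ carries the pointwise complex-conjugation involution; concretely, $\Phi(\xi^*)(\chi)=\overline{\Phi(\xi)(\chi)}$. Consequently, a closed ideal $J$ of $\ell^1(H)$ is self-adjoint if and only if its Gelfand image $\Phi(J)\subset C(\widehat{H})$ is stable under complex conjugation, and the problem reduces to producing a closed ideal whose image fails this property.

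A key preliminary observation is that any closed ideal of the form $I(E):=\{\xi\in\ell^1(H) : \Phi(\xi)|_E\equiv 0\}$, for a closed subset $E\subset\widehat{H}$, is automatically self-adjoint, because the vanishing of $\Phi(\xi)$ on $E$ is stable under complex conjugation. Hence the desired non-self-adjoint ideal must sit strictly between the closed ideal $J_0(E)$ generated by functions supported away from $E$ and the hull ideal $I(E)$, sharing the hull $E$ with $I(E)$ but being strictly smaller. The existence of such a strictly intermediate closed ideal is precisely the assertion that spectral synthesis fails at $E$. The plan is therefore to select such an $E\subset\widehat{H}$ together with carefully chosen generators so that the resulting closed ideal has a Gelfand image which is not conjugation-stable.

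The main obstacle is this last construction. Already in the prototypical case $H=\Z$ (where $\widehat{H}=\T$ and $\Phi(\ell^1(H))$ is the Wiener algebra $A(\T)$), the existence of a closed non-self-adjoint ideal rests on non-elementary harmonic analysis: Malliavin's theorem on the failure of spectral synthesis and the subsequent explicit constructions of non-synthesis generators by Kahane, Katznelson and others. For general infinite discrete abelian $H$, one can either reduce to the $H=\Z$ case whenever $H$ possesses an element of infinite order (so that $H$ surjects onto $\Z$ by splitting off an infinite cyclic summand, and the preimage under $\ell^1(H)\twoheadrightarrow\ell^1(\Z)$ of a closed non-self-adjoint ideal gives the desired ideal in $\ell^1(H)$), or work directly on $\widehat{H}$ using a failure-of-synthesis set. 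In either case a fully self-contained proof requires a substantial detour into commutative harmonic analysis; the cleanest route, and the one adopted in the paper, is to invoke the theorem in its final form as packaged in Theorem~7.7.1 of Rudin's \emph{Fourier Analysis on Groups}, \cite{Rudin}.
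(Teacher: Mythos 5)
The paper does not actually prove this statement --- it is imported verbatim from Rudin \cite[Theorem 7.7.1]{Rudin} --- and your proposal, after a correct framing via the Fourier transform and the observation that hull ideals $I(E)$ are automatically self-adjoint, ends by invoking exactly the same citation, so the two approaches coincide. One small inaccuracy in your optional aside: an infinite abelian group with an element of infinite order need not surject onto $\Z$ (e.g.\ $\Q$ is divisible, so all of its quotients are divisible), so the ``split off an infinite cyclic summand'' reduction would have to be replaced by an induction argument along the subgroup $\Z\le H$ (in the spirit of Lemma \refeq{propLiftIdeal}); since you, like the paper, ultimately rest on Rudin's theorem, this does not affect the conclusion.
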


\begin{lemma}\label{propAlmFree}
    Let $G$ be abelian and suppose that there is some $x\in X$ such that $G_x$ is infinite. Then $\luno$ has a closed and non-self-adjoint ideal.
\end{lemma}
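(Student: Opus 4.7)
The plan is to reduce to Lemma~\ref{propLiftIdeal} applied to a suitable restriction of the action, combined with the just-cited theorem of Rudin on $\ell^1$ of infinite discrete abelian groups. The obstruction to applying Lemma~\ref{propLiftIdeal} directly with $H = G_x$ is that $G_x$ fixes only the point $x$ and need not be contained in $\ker(\alpha)$, since its elements may act nontrivially elsewhere on $X$. To bypass this, I would restrict the dynamics to the closed $G$-invariant subset $D = \overline{G \cdot x}$ and work with the restricted action $G \curvearrowright D$.

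The first step is to verify that $G_x$ acts trivially on $D$. Since $G$ is abelian, for every $t \in G_x$ and $s \in G$ one has
$$\sigma_t(\sigma_s(x)) = \sigma_{ts}(x) = \sigma_{st}(x) = \sigma_s(\sigma_t(x)) = \sigma_s(x),$$
so $G_x$ pointwise fixes the orbit of $x$; by continuity of each $\sigma_t$ this extends to the closure $D$. Hence for the restricted action we have $G_x \subseteq \ker(\alpha|_D)$. Since $G_x$ is an infinite discrete abelian group, the theorem from \cite{Rudin} invoked just above supplies a closed non-self-adjoint ideal of $\ell^1(G_x)$; then Lemma~\ref{propLiftIdeal}, applied to the system $G \curvearrowright D$ with $H = G_x$, produces a closed non-self-adjoint ideal $\widetilde J \subseteq \ell^1(G \curvearrowright D)$.

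Finally, I would pull $\widetilde J$ back to $\luno$ along the surjective, norm-one $*$-homomorphism $R_D : \luno \to \ell^1(G \curvearrowright D)$ introduced before Lemma~\ref{lemmaRTF}. Set $I := R_D^{-1}(\widetilde J)$: this is a closed ideal of $\luno$ by continuity of $R_D$, and it is non-self-adjoint because if we had $I^* = I$, then applying the surjective $*$-homomorphism $R_D$ would give $\widetilde J^* = R_D(I^*) = R_D(I) = \widetilde J$, contradicting the choice of $\widetilde J$. The main conceptual obstacle is the very first move: recognizing that one must restrict to $\overline{G \cdot x}$ in order to convert the pointwise stabilizer $G_x$ into a genuine kernel subgroup of the action. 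Once that is in place, Lemma~\ref{propLiftIdeal} together with the functorial behaviour of $R_D$ does the rest with no further calculation.
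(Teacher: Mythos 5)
Your proposal is correct and follows essentially the same route as the paper: restrict to $D=\overline{G\cdot x}$, use commutativity of $G$ to see that $G_x$ lies in the kernel of the restricted action, apply Rudin's theorem and Lemma~\ref{propLiftIdeal} to produce a closed non-self-adjoint ideal in $\ell^1(G\curvearrowright D)$, and pull it back along $R_D$. The paper's argument is the same, including the final preimage step.
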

\begin{proof}
    Let $x\in X$ such that $G_x$ is an infinite subgroup of $G$. By the theorem above, as $G_x$ is abelian, there is a closed and non-self-adjoint ideal in $\ell^1(G_x)$. Let $D$ be the closure of the orbit of $x$, which is a $G$-invariant and closed subset of $X$. Consider the restriction of the action to $D$ and consider the algebra $\ell^1(G\curvearrowright D)$. As $G$ is abelian, we have that every point in the orbit of $x$ has the same stabilizer subgroup, so $G_x$ is contained in the kernel of the action of $G$ on $D$. Using Lemma \refeq{propLiftIdeal} we have that there is a closed and non-self-adjoint ideal $\widetilde I$ in $\ell^1(G\curvearrowright D)$. We consider the contractive *-homomorphism $R_D: \luno \to \ell^1(G\curvearrowright D)$ given by $R_D(\sum_{s\in G}f_s\delta_s) = \sum_{s\in G}f_s|_D \delta_s$. As $R_D$ is surjective, we have that $I=R_D^{-1}(\widetilde I)$ is a closed and non-self-adjoint ideal of $\luno$.
\end{proof}

The above proposition basically tells us that, for abelian groups, an infinite point-stabilizer subgroup induces a closed but non-self-adjoint ideal in $\luno$. This allows us to obtain the following statement. We recall that, as always throughout the paper, $G$ is a countable infinite group acting on a compact Hausdorff space $X$.

\begin{theorem}
    Let $G$ be abelian and torsion-free. Then, the action $\action$ is free if and only if every closed ideal of $\luno$ is self-adjoint.
\end{theorem}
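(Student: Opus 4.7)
The plan is to reduce both directions to results already established in the paper, with the torsion-free hypothesis entering only at one crucial step. For the forward direction, assume $\action$ is free. Since $G$ is abelian, Remark \refeq{rtfAbelian} tells us that the action is automatically residually topologically free, and then Corollary \refeq{libImpSelfAdj} gives that every closed ideal of $\luno$ is self-adjoint. So this direction is immediate.

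For the converse I would argue by contrapositive: suppose $\action$ is not free and produce a closed non-self-adjoint ideal in $\luno$. Non-freeness gives some $x\in X$ with $G_x\neq\{e\}$, so we may pick $s\in G_x\setminus\{e\}$. Here is where torsion-freeness enters: since $G$ has no nontrivial elements of finite order, $s$ has infinite order, hence $\langle s\rangle\cong\Z$ is an infinite subgroup of $G_x$. In particular $G_x$ itself is infinite. Now Lemma \refeq{propAlmFree}, which requires exactly this hypothesis for abelian $G$, directly produces the desired closed non-self-adjoint ideal in $\luno$, contradicting the assumed self-adjointness of all closed ideals.

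The substantive work is not in the theorem itself but in the lemmas it invokes. The real engine on the non-free side is Lemma \refeq{propLiftIdeal} together with Rudin's theorem on the existence of closed non-self-adjoint ideals in $\ell^1(H)$ for infinite discrete abelian $H$; both are needed to build the ideal in $\ell^1(G\curvearrowright D)$ and lift it to $\luno$. The two hypotheses on $G$ play genuinely different roles: abelianness is used to guarantee that all points in the orbit of $x$ share the stabilizer $G_x$, so that $G_x$ sits inside the kernel of the restricted action on the orbit closure (this is what makes Lemma \refeq{propLiftIdeal} applicable); torsion-freeness is used only to upgrade ``$G_x$ nontrivial'' to ``$G_x$ infinite,'' which is precisely the hypothesis Rudin's theorem needs. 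The counterexample in Example \refeq{ctrExample} shows that once torsion enters, a nontrivial finite stabilizer need not yield any closed non-self-adjoint ideal, so both hypotheses are essential for this clean biconditional.
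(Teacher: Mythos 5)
Your proof is correct and follows essentially the same route as the paper: free implies r.t.f.\ implies all closed ideals self-adjoint via Corollary \refeq{libImpSelfAdj}, and conversely torsion-freeness upgrades a nontrivial stabilizer to an infinite one so that Lemma \refeq{propAlmFree} applies. Your additional commentary on the separate roles of abelianness and torsion-freeness is accurate and matches the structure of the supporting lemmas.
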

\begin{proof}
    If the action $\action$ is free then it is also r.t.f, so every closed ideal of $\luno$ is self-adjoint as a consequence of Corollary \refeq{libImpSelfAdj}. Suppose now that every closed ideal of $\luno$ is self-adjoint. If the action $\action$ is not free then there is some $x\in X$ such that $G_x \neq \{e\}$. As $G$ is torsion-free, $G_x$ must be infinite, so by Lemma \refeq{propAlmFree} there is a closed and non-self-adjoint ideal in $\luno$. This is a contradiction.
\end{proof}

\end{document}